\newlist{clist}{enumerate}{1}
\setlist*[clist]{label=(\roman*),nosep}
\let\@fnsymbol\@arabic
\theoremstyle{definition}
\newtheorem{Def}{Definition}[section]
\newtheorem{Rem}[Def]{Remark}
\newtheorem{Eg}[Def]{Example}
\newtheorem*{Cla}{Claim}
\newtheorem{Que}[Def]{Question}
\theoremstyle{plain}
\newtheorem{Thm}[Def]{Theorem}
\newtheorem{Prop}[Def]{Proposition}
\newtheorem{Lem}[Def]{Lemma}
\newtheorem{Cor}[Def]{Corollary}
\newtheorem{Nota}[Def]{Notation}
\newcommand{\quotient}[2]{
\mathchoice{  \text{\raise1ex\hbox{$#1$}\!\Big/\!\lower1ex\hbox{$#2$}} }
                  {  \text{\raise1pt\hbox{$#1$}\big/\lower1pt\hbox{$#2$}} }
                  {  {#1}\,/\,{#2}  }
                  {  {#1}\,/\,{#2}  }
}
\title{NC functions over the nc Grassmannian}
\author{Hyuga Ito}
\address{
Graduate School of Mathematics, Nagoya University, Furocho, Chikusaku, Nagoya, 464-8602, Japan
}
\email{hyuga.ito.e6@math.nagoya-u.ac.jp}
\date{\today}
\begin{document}

\begin{abstract}
    We explain how to formulate Voiculescu’s non-commutative Riemann sphere framework for fully matricial functions \cite{v10} within the theory of nc functions developed by Vinnikov and Kaliuzhnyi-Verbovetskyi \cite{kvv14}. We then extend this framework from the Riemann sphere to Grassmannians (and flag manifolds). Moreover, as an example of nc functions in this setting, we introduce a generalization of Voiculescu’s non-commutative resolvent on the Riemann sphere, study a corresponding generalization of the resolvent equation, and discuss aspects of the spectral analysis of unbounded operators in Voiculescu’s framework \cite{v10}.
\end{abstract}
\maketitle


\allowdisplaybreaks{
\section{Introduction}
Since Taylor's work \cite{t72}, the question of what constitutes an appropriate notion of function of several non-commuting variables has been considered. Taylor \cite[section 6]{t72} introduced its candidate, which is now called an \textit{nc function} (nc stands for non-commutative), and Vinnikov and Kaliuzhnyi-Verbovetskyi \cite{kvv14} later developed the theory of nc functions systematically. Let us briefly recall some notions of the theory of nc functions.

Let $\mathcal{R}$ be a unital commutative ring, and let $\mathcal{M},\mathcal{N}$ be $\mathcal{R}$-modules. Set $M(\mathcal{M})=\bigsqcup_{n\in\mathbb{Z}_{\geq1}}M_{n}(\mathcal{M})$ with $M_{n}(\mathcal{M})=\mathcal{M}\otimes_{\mathcal{R}}M_{n}(\mathcal{R})$, $n\in\mathbb{Z}_{\geq1}$. An object $\Omega$ is called an \textit{nc set over $M(\mathcal{M})$} if $\Omega$ is a subset of $M(\mathcal{M})$ and $a\oplus b\in\Omega$ for any $a,b\in\Omega$. A map $f$ from an nc set $\Omega$ over $M(\mathcal{M})$ to $M(\mathcal{N})$ is \textit{graded} if $f(\Omega_{n})\subset M_{n}(\mathcal{N})$ for any $n\in\mathbb{Z}_{\geq1}$, where $\Omega_{n}$ denotes $\Omega\cap M_{n}(\mathcal{M})$. A graded map $f:\Omega\to M(\mathcal{N})$ is \textit{nc} if $f(a\oplus b)=f(a)\oplus f(b)$ for any $a,b\in\Omega$ and $f(sas^{-1})=sf(a)s^{-1}$ for any $a\in\Omega\cap M_{n}(\mathcal{M})$ and $s\in GL_{n}(\mathcal{R})$ with $sas^{-1}\in\Omega$. According to \cite[Proposition 2.1]{kvv14}, a graded function $f:\Omega\to M(\mathcal{N})$ is nc if and only if $f$ satisfies the following \textit{intertwining property}: For any $a\in\Omega_{n}$, $b\in \Omega_{m}$ and $T\in M_{n,m}(\mathcal{R})$,
\begin{equation}
    aT=Tb\quad \Rightarrow\quad f(a)T=Tf(b).\label{eq_affine_intertwining}
\end{equation}
We also have the notion of \textit{nc difference-differential operator} $\Delta$, which plays the role of a kind of ``non-commutative differential operator'' for nc functions.

In the context of free probability, Voiculescu \cite{v00,v04} independently introduced and developed a certain theory of functions of non-commuting variables. 
Its formulation is conceptually similar to that of \cite{kvv14}; namely, it involves corresponding notions of nc sets, nc functions, and the nc difference-differential operator, which are called \textit{fully matricial sets}, \textit{fully matricial functions}, and \textit{Voiculescu's derivation-comultiplication $\partial$}, respectively. However, there are several differences. 

A main difference among the above-mentioned theories lies in how each ``non-commutative differential operator'' is constructed. The nc difference-differential operator $\Delta$ is constructed for all nc functions by repeatedly using the intertwining property (\ref{eq_affine_intertwining}) with appropriate matrices $T$. On the other hand, Voiculescu's derivation-comultiplication $\partial$ is constructed only for analytic functions, relying on their properties (e.g., the linearity of the G\^{a}teaux derivative of an analytic function).  
See \cite[Appendix B]{i23} for a more precise explanation of the relation between Vinnikov and Kaliuzhnyi-Verbovetskyi's theory \cite{kvv14} and Voiculescu's theory \cite{v04}.

The above-mentioned theories are developed over matrix spaces; that is, they are ``\textit{affine} frameworks''.
Voiculescu \cite{v10} further generalized his affine theory \cite{v04} of fully matricial functions to one over the \textit{Riemann sphere}, ``\textit{non-affine}'' space. 
It is then natural to ask whether one can formulate a corresponding non-affine framework of nc functions in the context of \cite{kvv14}. 

The purpose of this paper is to explain how to formulate such a non-affine framework of \cite{kvv14}, particularly in the case of the Grassmannian (and, more generally, flag manifolds). A key observation is that \textit{there exists an appropriate substitute for the intertwining property (\ref{eq_affine_intertwining}) even in a non-affine setting}. Indeed, we will show that, in the case of the Grassmannians, there is such a property (called the \textit{Grassmannian intertwining property}), which recovers the above-mentioned intertwining property (\ref{eq_affine_intertwining}). Just as the intertwining property (\ref{eq_affine_intertwining}) played a central role in \cite{kvv14}, the Grassmannian intertwining property will play a central role in the construction of nc difference-differential operator. 

This paper consists of six sections besides this introduction, and an appendix.
In section \ref{section_grassmann}, we will introduce the concept of nc $(d;m)$-Grassmannian $Gr^{(d;m)}(\mathcal{A})$ over $\mathcal{R}$-algebra $\mathcal{A}$ from the viewpoint of homogeneous spaces. 
In section \ref{section_ncsset_function}, we will define the notions of nc sets and nc functions over the nc $(d;m)$-Grassmannian. Here, we will establish the Grassmannian intertwining property (see Proposition \ref{prop_grassmann_intertwining}).
In section \ref{section_ddoperator}, we will construct an nc difference-differential operator, denoted by $\widetilde{\Delta}^{(d;m)}_{u,v}$, for each $(u,v)\in[m-d]\times[d]$, based on the Grassmannian intertwining property. 
In section \ref{section_Grassmann_resolvent}, we will introduce the $(d;m)$-Grassmannian $\mathcal{B}$-resolvent set $\widetilde{\rho}^{(d;m)}(\pi;\mathcal{B})$ and $(d;m)$-Grassmannian $\mathcal{B}$-resolvent $\widetilde{\mathfrak{R}}^{(d;m)}(\pi;\mathcal{B}|v,u)$ with respect to $(v,u)$, which are examples of nc set and nc function, respectively, and they generalize those of Voiculescu. 
In section \ref{section_moreabout}, we will generalize Voiculescu's resolvent equation \cite[subsection 6.1]{v10} to the $(d;m)$-Grassmannian $\mathcal{B}$-resolvents (see Theorem \ref{thm_nescond}). Moreover, we will explain that Voiculescu's Riemann sphere framework, developed in \cite{v10}, can play a role of the spectral analysis of unbounded operators, and show a partial converse of Theorem \ref{thm_nescond} (see Theorem \ref{thm_partial_sufcond}).
In section \ref{section_question}, we will pose some hopefully natural questions.
In appendix \ref{appendix_flag}, we will explain how to extend the Grassmannian framework to the setting of flag manifolds.

\section{The nc Grassmannian $Gr^{(d;m)}(\mathcal{A})$ over $\mathcal{R}$-algebra $\mathcal{A}$}\label{section_grassmann}
Let $\mathcal{R}$ be a unital commutative ring and $\mathcal{A}$ a unital $\mathcal{R}$-algebra. The scalar multiplication by $\mathcal{R}$ is understood to act centrally; namely, $a\cdot r=r\cdot a$ for all $r\in\mathcal{R}$ and $\mathcal{A}$. Fix $m\in\mathbb{Z}_{\geq1}$, $d\in[m]:=\{1,2,\dots,m\}$. We define 
\begin{align*}
    H^{(d,m)}_{n}(\mathcal{A})
    &=\left\{\left[
    \begin{smallmatrix}
        X&0\\
        Y&Z
    \end{smallmatrix}\right]\,\middle|\,
    X\in GL_{m-d}(M_{n}(\mathcal{A})),Y\in M_{d,m-d}(M_{n}(\mathcal{A})),Z\in GL_{d}(M_{n}(\mathcal{A}))
    \right\}.
\end{align*}

The equivalence relation $\sim^{(d;m)}_{n}$ on $GL_{m}(M_{n}(\mathcal{A}))$ is defined by, for any $A,B\in GL_{m}(M_{n}(\mathcal{A}))$, $A\sim^{(d;m)}_{n}B$ if and only if there exists an element $\Gamma\in H^{(d;m)}_{n}(\mathcal{A})$ such that $A=B\Gamma$. 

\begin{Def}
    We define the \textit{nc Grassmannian} $Gr^{(d;m)}(\mathcal{A})$ over $\mathcal{A}$ by
    \[
    Gr^{(d;m)}(\mathcal{A}):=\bigsqcup_{n\geq1}Gr^{(d;m)}_{n}(\mathcal{A}) \mbox{ with } Gr^{(d;m)}_{n}(\mathcal{A}):=GL_{m}(M_{n}(\mathcal{A}))/\sim^{(d;m)}_{n}.
    \]
    In the case of $(d;m)=(1;2)$, we simply write $Gr(\mathcal{A})$ instead of $Gr^{(1;2)}(\mathcal{A})$.
\end{Def}

\begin{Rem}
    \begin{enumerate}
        \item For any $A,B\in GL_{m}(M_{n}(\mathcal{A}))$, it follows that $A\sim^{(d;m)}_{n}B$ if and only if there exists an element $Z\in H^{(d;m)}_{n}(\mathcal{A})$ such that $A
        \left[\begin{smallmatrix}
            0\\
            I_{n}
        \end{smallmatrix}\right]
        =B
        \left[\begin{smallmatrix}
            0\\
            Z
        \end{smallmatrix}\right]$.
        \item If $\mathcal{R}=\mathbb{C}$ and $\mathcal{A}$ is a Banach algebra over $\mathbb{C}$ with $(d;m)=(1;2)$, then $Gr(\mathcal{A})$ is nothing but Voiculescu's non-commutative Riemann sphere \cite{v10}.
    \end{enumerate}
\end{Rem}

The affine space $M(\mathcal{A})^{d(m-d)}$ is embedded into $Gr^{(d;m)}(\mathcal{A})$ by
\begin{equation}
M_{n}(\mathcal{A})^{d(m-d)}\simeq M_{m-d,d}(M_{n}(\mathcal{A}))\ni
X
\mapsto
\widetilde{X}:=
\left[\begin{smallmatrix}
    0&I_{d}\otimes I_{n}\\
    I_{m-d}\otimes I_{n}&X
\end{smallmatrix}\right]/\sim^{(d;m)}_{n}\in Gr^{(d;m)}_{n}(\mathcal{A}).
\end{equation}
Let $\Omega$ be an nc set over $M(\mathcal{A})$ (in the sense of \cite{kvv14}), then it is easy to see that $\widetilde{\Omega}=\{\widetilde{X}\,|\,X\in\Omega\}$ is an nc set over $Gr^{(d;m)}(\mathcal{A})$. Let $\Omega$ be a (non-necessarily nc) set over $Gr^{(d;m)}(\mathcal{A})$. If $\Omega\cap \widetilde{M(\mathcal{A})}$ is nc over $Gr^{(d;m)}(\mathcal{A})$, then the inverse image of $\Omega\cap \widetilde{M(\mathcal{A})}$ by the above embedding is nc over $M(\mathcal{A})$. Hence, we identify $\Omega\cap \widetilde{M(\mathcal{A})}$ with its inverse image by the embedding and use the same symbol for them.  

\section{Nc sets and nc functions over $Gr^{(d;m)}(\mathcal{A})$}\label{section_ncsset_function}
First, we will define two operations for elements of $Gr^{(d;m)}(\mathcal{A})$.
\begin{Def}\label{def_nc_operation}
    For two elements $A\in M_{m}(M_{n}(\mathcal{A}))$ and $A'\in M_{m}(M_{n'}(\mathcal{A}))$, we define an element $A\widetilde{\oplus}A'$ of $M_{m}(M_{n+n'}(\mathcal{A}))$ by $A\widetilde{\oplus}A'=\sum_{i,j=1}^{m}(A_{i,j}\oplus A'_{i,j})\otimes e_{i,j}^{(m)}$, where $A=\sum_{i,j=1}^{m}A_{i,j}\otimes e^{(m)}_{i,j}$ and $A'=\sum_{i,j=1}^{m}A'_{i,j}\otimes e^{(m)}_{i,j}$ with $A_{i,j}\in M_{n}(\mathcal{A})$, $A'_{i,j}\in M_{n'}(\mathcal{A})$. (Here, $e^{(m)}_{i,j}$ denotes the $m$ by $m$ matrix unit of $(i,j)$-entry.)

    For two elements $\sigma\in Gr^{(d;m)}_{n}(\mathcal{A})$ and $\sigma'\in Gr^{(d;m)}_{n'}(\mathcal{A})$, we define the \textit{direct sum} $\sigma\oplus\sigma'\in Gr^{(d;m)}_{n+n'}(\mathcal{A})$ by $\sigma\oplus\sigma'=(A\widetilde{\oplus}A')/\sim^{(d;m)}_{n+n'}$ if $A\in\sigma$ and $A'\in\sigma'$.

    For an element $\sigma\in Gr^{(d;m)}_{n}(\mathcal{A})$ and $g\in GL_{m}(M_{n}(\mathcal{A}))$, we define $g\sigma:=(gA)/\sim^{(d;m)}_{n}$ for some $A\in\sigma$. For any $s\in GL_{n}(\mathcal{R})$, we define the \textit{similarity} $s\cdot \sigma\in Gr^{(d;m)}_{n}(\mathcal{A})$ by $s\cdot\sigma=s^{\oplus m}\sigma$.
\end{Def}

It is easy to see that the above two operations are well defined (i.e., the definitions are independent of the choice of representatives $A,A'$ of $\sigma,\sigma'$, respectively). The following are central objects in this paper:

\begin{Def}\label{def_nc_subset_function}
    An object $\Omega=\bigsqcup_{n\geq1}\Omega_{n}$ is called an \textit{nc set} over $Gr^{(d;m)}(\mathcal{A})$ if $\Omega$ satisfies the following conditions:
    \begin{enumerate}
        \item Each $\Omega_{n}$ is a subset of $Gr^{(d;m)}_{n}(\mathcal{A})$.
        \item For any $n,n'\in\mathbb{Z}_{\geq1}$, we have $\Omega_{n}\oplus\Omega_{n'}\subset\Omega_{n+n'}$.
    \end{enumerate}

    Let $\mathcal{B}$ be an $\mathcal{R}$-module and $\Omega$ be an nc set over $Gr^{(d;m)}(\mathcal{A})$. Then, a function $f:\Omega\to M(\mathcal{B})$ is \textit{graded} if $f(\Omega_{n})\subset M_{n}(\mathcal{B})$ for any $n\in\mathbb{Z}_{\geq1}$. A graded function $f:\Omega\to M(\mathcal{B})$ is \textit{nc} if $f$ satisfies the following conditions:
    \begin{enumerate}
        \item[(D)] \textit{Direct-sum-preserving}: $f(\sigma\oplus\sigma')=f(\sigma)\oplus f(\sigma')$ for any $\sigma,\sigma'\in \Omega$.
        \item[(S)] \textit{Similarity-preserving}: $f(s\cdot \sigma)=\mathrm{Ad}(s)(f(\sigma)):=sf(\sigma)s^{-1}$ whenever $\sigma\in\Omega_{n}$ and $s\in GL_{n}(\mathcal{R})$ satisfy $s\cdot\sigma\in\Omega_{n}$.
    \end{enumerate}
    We denote by $\mathcal{T}^{(0)}(\Omega;\mathcal{B})$ the space of all nc functions from $\Omega$ to $M(\mathcal{B})$.
\end{Def}


The following is the Grassmannian extension of the intertwining property \cite[Proposition 2.1]{kvv14}:

\begin{Prop}\label{prop_grassmann_intertwining}
    Let $\Omega$ be an nc set over $Gr^{(d;m)}(\mathcal{A})$, and let $f$ be a graded function from $\Omega$ to $M(\mathcal{B})$. Then, the following are equivalent:
    \begin{enumerate}
        \item[(a)] $f$ is nc.
        \item[(b)] $f$ satisfies the following condition:
        \begin{enumerate}
            \item[(I)] For any $n,n'\in\mathbb{Z}_{\geq1}$, $T\in M_{n,n'}(\mathcal{R})$ and $\sigma\in \Omega_{n}$, $\sigma'\in\Omega_{n'}$, we have
        \[
        \left[\begin{smallmatrix}
            I_{n}&T\\
            0&I_{n'}
        \end{smallmatrix}\right]\cdot(\sigma\oplus\sigma')=\sigma\oplus\sigma'\quad
        \Rightarrow\quad
        f(\sigma)T=Tf(\sigma').
        \]
        \end{enumerate}
    \end{enumerate}
\end{Prop}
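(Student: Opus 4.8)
The plan is to prove the two implications $(a)\Rightarrow(b)$ and $(b)\Rightarrow(a)$ separately, mimicking the structure of \cite[Proposition 2.1]{kvv14} but adapted to the Grassmannian quotient.

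\medskip
\noindent\textbf{$(a)\Rightarrow(b)$.}
Assume $f$ is nc, and suppose $\left[\begin{smallmatrix} I_{n}&T\\ 0&I_{n'}\end{smallmatrix}\right]\cdot(\sigma\oplus\sigma')=\sigma\oplus\sigma'$ for some $T\in M_{n,n'}(\mathcal{R})$. Write $s:=\left[\begin{smallmatrix} I_{n}&T\\ 0&I_{n'}\end{smallmatrix}\right]\in GL_{n+n'}(\mathcal{R})$. By (S) we get $s\,f(\sigma\oplus\sigma')\,s^{-1}=f(\sigma\oplus\sigma')$, and by (D) we have $f(\sigma\oplus\sigma')=f(\sigma)\oplus f(\sigma')=\left[\begin{smallmatrix} f(\sigma)&0\\ 0&f(\sigma')\end{smallmatrix}\right]$. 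So the equation $s\left[\begin{smallmatrix} f(\sigma)&0\\ 0&f(\sigma')\end{smallmatrix}\right]=\left[\begin{smallmatrix} f(\sigma)&0\\ 0&f(\sigma')\end{smallmatrix}\right]s$ holds in $M_{n+n'}(\mathcal{B})$; expanding the off-diagonal block of this matrix identity gives exactly $f(\sigma)T=Tf(\sigma')$. (Here I am using that $\mathcal{B}$ is an $\mathcal{R}$-module so that the block arithmetic, with scalar entries from $T$ acting on $\mathcal{B}$-valued blocks, makes sense; this is routine.) This direction is easy.

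\medskip
\noindent\textbf{$(b)\Rightarrow(a)$.}
Now assume (I). I first recover (D): given $\sigma\in\Omega_{n}$, $\sigma'\in\Omega_{n'}$, I apply (I) twice, once with $T=0$ in one order and once with $T=0$ in the other order (equivalently, use the hypothesis for the pair $(\sigma,\sigma')$ and for $(\sigma',\sigma)$, noting $0\cdot(\sigma\oplus\sigma')=\sigma\oplus\sigma'$ trivially since $\left[\begin{smallmatrix} I&0\\ 0&I\end{smallmatrix}\right]$ is the identity). Taking $T=\left[\begin{smallmatrix}I_n\\0\end{smallmatrix}\right]$-type rectangular scalar matrices that intertwine $\sigma\oplus\sigma'$ with itself — specifically the projection-type and inclusion-type maps between $\mathcal{R}^{n}\oplus\mathcal{R}^{n'}$ and its summands — forces $f(\sigma\oplus\sigma')$ to be block diagonal with the correct blocks; this is the standard KVV argument and I will spell out which four scalar matrices $T$ to feed into (I). For (S): let $s\in GL_{n}(\mathcal{R})$ with $s\cdot\sigma\in\Omega_{n}$. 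The key point is to choose the right pair of elements and the right $T$ so that (I) yields $f(s\cdot\sigma)s = s f(\sigma)$, i.e. apply (I) to $\sigma_1=s\cdot\sigma$ and $\sigma_2=\sigma$ with $T$ chosen so that $\left[\begin{smallmatrix}I_n & T\\0&I_n\end{smallmatrix}\right]\cdot\big((s\cdot\sigma)\oplus\sigma\big)=(s\cdot\sigma)\oplus\sigma$; one checks this holds for a suitable $T$ built from $s$ (morally $T$ related to $s$ or $s-I$, conjugated appropriately), using the definition of the similarity action $s\cdot\sigma=s^{\oplus m}\sigma$ on the Grassmannian and the block structure of $H^{(d;m)}_{n}(\mathcal{A})$.

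\medskip
\noindent\textbf{Main obstacle.}
The genuinely non-affine part — and the step I expect to be the crux — is verifying that the hypothesis $\left[\begin{smallmatrix} I_n&T\\ 0&I_{n'}\end{smallmatrix}\right]\cdot(\sigma\oplus\sigma')=\sigma\oplus\sigma'$ is the \emph{correct} substitute for the affine relation $aT=Tb$, i.e. that it holds for exactly the right supply of $T$'s to run the KVV argument, and conversely that when it holds it really does constrain $f$. Unwinding it requires computing, for representatives $A\in\sigma$, $A'\in\sigma'$ and the representative $A\widetilde\oplus A'$ of $\sigma\oplus\sigma'$, when $\left[\begin{smallmatrix} I_n&T\\ 0&I_{n'}\end{smallmatrix}\right]^{\oplus m}(A\widetilde\oplus A')$ and $A\widetilde\oplus A'$ differ by a right factor in $H^{(d;m)}_{n+n'}(\mathcal{A})$ — i.e. the relation lives at the level of the homogeneous space, not of matrices, so the condition "$s\cdot\tau=\tau$" must be translated into an honest matrix equation modulo $H^{(d;m)}$. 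I would handle this by passing to the slice coordinates from Section~\ref{section_grassmann} (the embedding $X\mapsto\widetilde X$) when $\sigma,\sigma'$ lie in the affine chart, where the computation reduces to the familiar affine case and confirms the Grassmannian intertwining property recovers \eqref{eq_affine_intertwining}; and for general $\sigma,\sigma'$ by arguing directly with the stabilizer $H^{(d;m)}_{n+n'}(\mathcal{A})$ and the explicit form of $s^{\oplus m}$. The rest of the proof is then bookkeeping with $2\times2$ (and $m\times m$) block matrices.
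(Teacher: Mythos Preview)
Your approach is the same as the paper's, but you are vaguer than you need to be and you overcomplicate the verification step. For $(b)\Rightarrow(a)$ the paper uses exactly two $T$'s for (D): $T_1=\left[\begin{smallmatrix}I_n\\0\end{smallmatrix}\right]$ applied to the pair $(\sigma\oplus\sigma',\sigma)$, and $T_2=\left[\begin{smallmatrix}0\\I_{n'}\end{smallmatrix}\right]$ applied to $(\sigma\oplus\sigma',\sigma')$; these pin down the two block columns of $f(\sigma\oplus\sigma')$. For (S) the paper takes $T=s^{-1}$ applied to the pair $(\sigma,\,s\cdot\sigma)$, giving $f(\sigma)s^{-1}=s^{-1}f(s\cdot\sigma)$ directly --- no need to guess ``$s$ or $s-I$''. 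Your sentence about applying (I) with $T=0$ is vacuous (both hypothesis and conclusion are trivial) and should be dropped.

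Your ``main obstacle'' is much smaller than you fear: there is no need to pass to affine charts. One verifies the hypothesis of (I) for each of the three $T$'s by a two-line computation with representatives. For instance, for the (S) step: if $A\in\sigma$ and $u=\left[\begin{smallmatrix}I_n&s^{-1}\\0&I_n\end{smallmatrix}\right]$, then the $(i,j)$ block of $u^{\oplus m}\bigl(A\widetilde\oplus(s^{\oplus m}A)\bigr)$ is $\left[\begin{smallmatrix}A_{ij}&A_{ij}\\0&sA_{ij}\end{smallmatrix}\right]$, which equals the $(i,j)$ block of $\bigl(A\widetilde\oplus(s^{\oplus m}A)\bigr)\Gamma$ with $\Gamma$ the block-diagonal matrix $I_m\otimes\left[\begin{smallmatrix}I_n&I_n\\0&I_n\end{smallmatrix}\right]$; since this $\Gamma$ is visibly in $H^{(d;m)}_{2n}(\mathcal{A})$, the equivalence holds. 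The checks for $T_1,T_2$ are entirely analogous.
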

\begin{proof}
    (a)$\Rightarrow$(b): By direct calculation.

    (b)$\Rightarrow$(a): We can check the following equalities:
    \begin{gather}
        \left[
    \begin{smallmatrix}
        I_{n+n'}&T_{1}\\
        0&I_{n}
    \end{smallmatrix}
    \right]\cdot((\sigma\oplus\sigma')\oplus\sigma)=(\sigma\oplus\sigma')\oplus\sigma,\quad T_{1}=\left[\begin{smallmatrix}
        I_{n}\\
        0_{n',n}
    \end{smallmatrix}\right],\label{eq_1}\\
    \left[
    \begin{smallmatrix}
        I_{n+n'}&T_{2}\\
        0&I_{n'}
    \end{smallmatrix}
    \right]\cdot((\sigma\oplus\sigma')\oplus\sigma')=(\sigma\oplus\sigma')\oplus\sigma',\quad T_{2}=\left[\begin{smallmatrix}
        0_{n,n'}\\
        I_{n'}
    \end{smallmatrix}\right],\label{eq_2}\\
    \left[\begin{smallmatrix}
        I_{n}&s^{-1}\\
        0&I_{n}
    \end{smallmatrix}\right]\cdot (\sigma\oplus(s\cdot\sigma))=\sigma\oplus(s\cdot\sigma)\label{eq_3}
    \end{gather}
    for any $\sigma\in\Omega_{n}$, $\sigma'\in\Omega_{n'}$ and $s\in GL_{n}(\mathcal{R})$.
    Using (b) with equalities (\ref{eq_1}),(\ref{eq_2}), we have
    \[
    f(\sigma\oplus\sigma')
    \left[\begin{smallmatrix}
        I_{n}\\
        0
    \end{smallmatrix}\right]
    =
    \left[\begin{smallmatrix}
        f(\sigma)\\
        0
    \end{smallmatrix}\right],\quad
    f(\sigma\oplus\sigma')
    \left[\begin{smallmatrix}
        0\\
        I_{n'}
    \end{smallmatrix}\right]
    =
    \left[\begin{smallmatrix}
        0\\
        f(\sigma')
    \end{smallmatrix}\right].
    \]
    Similarly, using (b) with equality (\ref{eq_3}), we obtain $f(\sigma)s^{-1}=s^{-1}f(s\cdot\sigma)$ as desired.
\end{proof}

\begin{Rem}
    In the case of $(d;m)=(1,2)$, consider \textit{affine} elements $\widetilde{X}=\left[\begin{smallmatrix}
        0&I_{n}\\
        I_{n}&X
    \end{smallmatrix}\right]/\sim_{n}\in Gr_{n}(\mathcal{A})$ and $\widetilde{X'}=\left[\begin{smallmatrix}
        0&I_{n'}\\
        I_{n'}&X'
    \end{smallmatrix}\right]/\sim_{n'}\in G_{n'}(\mathcal{A})$ with $X\in M_{n}(\mathcal{A})$ and $X'\in M_{n'}(\mathcal{A})$. For any $T\in M_{n,n'}(\mathcal{R})$, we can see that $\left[\begin{smallmatrix}
            I_{n}&T\\
            0&I_{n'}
        \end{smallmatrix}\right]\cdot(\widetilde{X}\oplus\widetilde{X'})=\widetilde{X}\oplus\widetilde{X'}$ if and only if $XT=TX'$. Thus, property (b) in Proposition \ref{prop_grassmann_intertwining} recovers the affine intertwining property \cite[Proposition 2.1]{kvv14}. 
\end{Rem}

\begin{Rem}(similarity-invariant envelope)\label{rem_similarity_envelope}
    Let $\Omega$ be an nc set over $Gr^{(d;m)}(\mathcal{A})$. We define the \textit{similarity-invariant envelope} $\widehat{\Omega}=\bigsqcup_{n\geq1}\widehat{\Omega}_{n}$ of $\Omega$ by $\widehat{\Omega}_{n}:=GL_{n}(\mathcal{R})\cdot\Omega_{n}:=\{s\cdot\sigma\,|\,s\in GL_{n}(\mathcal{R}),\sigma\in\Omega_{n}\}$. It is easy to see that $\widehat{\Omega}$ is an nc set over $Gr^{(d;m)}(\mathcal{A})$. Moreover, by construction, we have $s\cdot\widehat{\Omega}_{n}=\widehat{\Omega}_{n}$ for any $n\in\mathbb{Z}_{\geq1}$ and $s\in GL_{n}(\mathcal{R})$.
    
    Let $f$ be an nc function from $\Omega$ to $M(\mathcal{B})$. Then, there exists a unique nc function $\widehat{f}$ from $\widehat{\Omega}$ to $M(\mathcal{B})$ such that $\widehat{f}|_{\Omega}=f$.
\end{Rem}

The following is the multivariable version of an nc function.
\begin{Def}\label{def_multivariable_nc}
    Let $\Omega^{(0)},\Omega^{(1)}$ be nc sets over $Gr^{(d;m)}(\mathcal{A})$, and let $\mathcal{B}^{(0)},\mathcal{B}^{(1)}$ be $\mathcal{R}$-modules. Set $\mathrm{Hom}_{\mathcal{R}}(M(\mathcal{B}^{(1)}),M(\mathcal{B}^{(0)})):=\bigsqcup_{n,n'\geq1}\mathrm{Hom}_{\mathcal{R}}(M_{n,n'}(\mathcal{B}^{(1)}),M_{n,n'}(\mathcal{B}^{(0)}))$. A function $f:\Omega^{(0)}\times\Omega^{(1)}\to \mathrm{Hom}_{\mathcal{R}}(M(\mathcal{B}^{(1)}),M(\mathcal{B}^{(0)}))$ is \textit{graded} if $f(\Omega^{(0)}_{n};\Omega^{(1)}_{n'})\subset\mathrm{Hom}_{\mathcal{R}}(M_{n,n'}(\mathcal{B}^{(1)}),M_{n,n'}(\mathcal{B}^{(0)}))$ for any $n,n'\in\mathbb{Z}_{\geq1}$.
    A graded function $f:\Omega^{(0)}\times\Omega^{(1)}\to \mathrm{Hom}_{\mathcal{R}}(M(\mathcal{B}^{(1)}),M(\mathcal{B}^{(0)}))$ is \textit{nc of order $1$} if $f$ satisfies the following:
    \begin{enumerate}
        \item \textit{Direct-sum-preserving}:
        \begin{enumerate}
            \item[(D-0)] $f(\sigma_{0}\oplus\sigma_{0}';\sigma_{1})\left(\left[\begin{smallmatrix}
                X\\
                X'
            \end{smallmatrix}\right]\right)
            =\left[\begin{smallmatrix}
                f(\sigma_{0};\sigma_{1})(X)\\
                f(\sigma'_{0};\sigma_{1})(X')
            \end{smallmatrix}\right]$ for any $\sigma_{0}\in\Omega^{(0)}_{n_{0}}$, $\sigma_{0}'\in\Omega_{n'_{0}}^{(0)}$, $\sigma_{1}\in\Omega_{n_{1}}^{(1)}$ and $X\in M_{n_{0},n_{1}}(\mathcal{A})$, $X'\in M_{n_{0}',n_{1}}(\mathcal{A})$\label{D_0}.
            \item[(D-1)] $f(\sigma_{0};\sigma_{1}\oplus\sigma_{1}')\left(\left[\begin{smallmatrix}
                Y&Y'\\
            \end{smallmatrix}\right]\right)
            =\left[\begin{smallmatrix}
                f(\sigma_{0};\sigma_{1})(Y)&f(\sigma_{0};\sigma_{1}')(Y')\\
            \end{smallmatrix}\right]$ for any $\sigma_{0}\in\Omega^{(0)}_{n_{0}}$, $\sigma_{1}\in\Omega_{n_{1}}^{(1)}$, $\sigma_{1}'\in\Omega_{n'_{1}}^{(1)}$ and $Y\in M_{n_{0},n_{1}}(\mathcal{A})$, $Y'\in M_{n_{0},n'_{1}}(\mathcal{A})$\label{D_1}.
        \end{enumerate}
        \item \textit{Similarity-preserving}:
        \begin{enumerate}
            \item[(S)]
            $f(s_{0}\cdot\sigma_{0};s_{1}\cdot\sigma_{1})(X)=s_{0}f(\sigma_{0};\sigma_{1})(s_{0}^{-1}Xs_{1})s_{1}^{-1}$ for any $s_{0}\in GL_{n_{0}}(\mathcal{R})$, $s_{1}\in GL_{n_{1}}(\mathcal{R})$, $\sigma_{0}\in\Omega^{(0)}_{n_{0}}$, $\sigma_{1}\in\Omega^{(1)}_{n_{1}}$ and $X\in M_{n_{0},n_{1}}(\mathcal{A})$ with $s_{0}\cdot\sigma_{0}\in\Omega^{(0)}_{n_{0}}$ and $s_{1}\cdot\sigma_{1}\in\Omega^{(1)}_{n_{1}}$\label{S_0}.
        \end{enumerate}
    \end{enumerate}
    We denote by $\mathcal{T}^{(1)}(\Omega^{(0)},\Omega^{(1)};\mathcal{B}^{(0)},\mathcal{B}^{(1)})$ the space of all nc functions of order $1$ from $\Omega^{(0)}\times\Omega^{(1)}$ to $\mathrm{Hom}_{\mathcal{R}}(M(\mathcal{B}^{(0)}),M(\mathcal{B}^{(1)}))$.
\end{Def}

\begin{Rem}
    We can also show a multivariable counterpart of Proposition \ref{prop_grassmann_intertwining}. However, we omit it here, since it will not be used later in this paper. Moreover, we can formulate the notion of nc functions of order higher than $2$.
\end{Rem}

\section{The nc $(u,v)$-difference-differential operator $\widetilde{\Delta}_{u,v}^{(d;m)}$}\label{section_ddoperator}
In this section, we will construct non-commutative differential operators for nc functions over the nc Grassmannian.
Our strategy is to adapt the argument of \cite{kvv14} to the framework of the nc Grassmannian.
This will be achieved by replacing the affine intertwining property $XT=TY\,\Rightarrow\, f(X)T=Tf(Y)$ used in \cite{kvv14} with the Grassmannian intertwining property $\left[\begin{smallmatrix}
    I_{n}&T\\
    0&I_{n'}
\end{smallmatrix}\right]\cdot(\sigma\oplus\sigma')=\sigma\oplus\sigma'\,\Rightarrow\,f(\sigma)T=Tf(\sigma')$ (see Proposition \ref{prop_grassmann_intertwining}). 

\subsection{Construction}
We first introduce new notation to simplify of our exposition. Fix $(u,v)\in[m-d]\times[d]$, $\sigma\in Gr^{(d;m)}_{n}(\mathcal{A})$, $\sigma'\in Gr^{(d;m)}_{n'}(\mathcal{A})$ and $X\in M_{n,n'}(\mathcal{A})$. We define an element $(A;A')_{u,v}(X)/\sim^{(d;m)}_{n+n'}$ of $Gr^{(d;m)}_{n+n'}(\mathcal{A})$ by
\begin{equation}
        \begin{aligned}
            (A;A')_{u,v}(X):=\left(A\widetilde{\oplus}A'+\sum_{j=1}^{d}
            \left[\begin{smallmatrix}
                 0_{n}&XA'_{v,m-d+j}\\
                 0_{n',n}&0_{n'}
            \end{smallmatrix}\right]
            \otimes e^{(m)}_{d+u,m-d+j}\right) \label{eq_6}
        \end{aligned}
\end{equation}
for any $A\in\sigma$ and $A'\in\sigma'$. Note that the equivalence class $(A;A')_{u,v}(X)/\sim^{(d;m)}_{n+n'}$ is independent of the choice of $A\in\sigma$ and $A'\in\sigma'$.

\begin{Nota}
    We denote by $(\sigma;\sigma')_{u,v}(X)$ the element $(A;A')_{u,v}(X)/\sim^{(d;m)}_{n+n'}$ with $A\in\sigma$ and $A'\in\sigma'$.
\end{Nota}

\begin{Def}
    Let $\Omega$ be an nc set over $Gr^{(d;m)}(\mathcal{A})$, and fix $(u,v)\in[m-d]\times[d]$. We say that $\Omega$ is \textit{$(u,v)$-admissible} if for any $\sigma\in\Omega_{n}$, $\sigma'\in\Omega_{n'}$ and any $X\in M_{n,n'}(\mathcal{A})$, there exists an invertible element $r\in\mathcal{R}$ such that $(\sigma;\sigma')_{u,v}(rX)\in\Omega_{n+n'}$. In the case of $(d;m)=(1;2)$, we simply say that $\Omega$ is \textit{admissible} instead of $(1,1)$-admissible.
\end{Def}

\begin{Lem}
    Let $\Omega$ be an nc set over $Gr^{(d;m)}(\mathcal{A})$, and let $f$ be an nc function from $\Omega$ to $M(\mathcal{B})$. Fix $(u,v)\in[m-d]\times[d]$. Suppose that $\sigma\in\Omega_{n}$, $\sigma'\in\Omega_{n'}$ and $X\in M_{n,n'}(\mathcal{A})$ satisfy $(\sigma;\sigma')_{u,v}(X)\in\Omega_{n+n'}$. Then, there exists a unique element $H\in M_{n,n'}(\mathcal{B})$, denoted by $(\widetilde{\Delta}^{(d;m)}_{u,v}f)(\sigma;\sigma')(X)$, such that 
    $f\left((\sigma;\sigma')_{u,v}(X)\right)=\left[\begin{smallmatrix}
        f(\sigma)&H\\
        0&f(\sigma')
    \end{smallmatrix}\right]$. Moreover, if $(\sigma;\sigma')_{u,v}(rX)\in\Omega_{n+n'}$ for some $r\in\mathcal{R}$, then we have $(\widetilde{\Delta}^{(d;m)}_{u,v}f)(\sigma;\sigma')(rX)=r(\widetilde{\Delta}^{(d;m)}_{u,v}f)(\sigma;\sigma')(X)$.
\end{Lem}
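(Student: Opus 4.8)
Write $\tau:=(\sigma;\sigma')_{u,v}(X)\in\Omega_{n+n'}$, and fix representatives $A\in\sigma$, $A'\in\sigma'$, so that $N:=(A;A')_{u,v}(X)$ represents $\tau$. The plan is to carry over the construction of the nc difference-differential operator from \cite{kvv14}, using property (I) of Proposition~\ref{prop_grassmann_intertwining} in place of the affine intertwining property.

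\emph{Block form of $f(\tau)$.} The heart of the matter is the pair of Grassmannian similarity identities
\[
\left[\begin{smallmatrix}I_{n+n'}&T_{1}\\0&I_{n}\end{smallmatrix}\right]\cdot(\tau\oplus\sigma)=\tau\oplus\sigma,
\qquad
\left[\begin{smallmatrix}I_{n'}&T_{2}\\0&I_{n+n'}\end{smallmatrix}\right]\cdot(\sigma'\oplus\tau)=\sigma'\oplus\tau,
\]
where $T_{1}=\left[\begin{smallmatrix}I_{n}\\0_{n',n}\end{smallmatrix}\right]$ and $T_{2}=\left[\begin{smallmatrix}0_{n',n}&I_{n'}\end{smallmatrix}\right]$. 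To check the first, use the representative $N\widetilde{\oplus}A$ of $\tau\oplus\sigma$ and view each of its $m^{2}$ matrix entries $N_{ij}\oplus A_{ij}$ as a $3\times3$ block matrix with block sizes $n,n',n$: it equals $\mathrm{diag}(A_{ij},A'_{ij},A_{ij})$ plus, only at the positions $(i,j)=(d+u,m-d+k)$ with $k\in[d]$, the correction $XA'_{v,j}$ in the $(1,2)$-block. Writing $s$ for the displayed conjugating matrix and $\gamma$ for the unipotent matrix over $\mathcal{R}$ with identity blocks on the diagonal and one identity block in the $(1,3)$-position (with respect to the sizes $n,n',n$), one checks $s\,(N_{ij}\oplus A_{ij})=(N_{ij}\oplus A_{ij})\,\gamma$ for all $i,j$; hence $s^{\oplus m}(N\widetilde{\oplus}A)=(N\widetilde{\oplus}A)\,\gamma^{\oplus m}$, and since $\gamma^{\oplus m}\in H^{(d;m)}_{2n+n'}(\mathcal{A})$ (it is block-diagonal with invertible blocks) this is precisely the first identity. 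The second is obtained the same way from the representative $A'\widetilde{\oplus}N$. Since $f$ is nc, it obeys property (I) of Proposition~\ref{prop_grassmann_intertwining}; applying it with $T=T_{1}$ to the first identity and with $T=T_{2}$ to the second gives $f(\tau)T_{1}=\left[\begin{smallmatrix}f(\sigma)\\0\end{smallmatrix}\right]$ and $T_{2}f(\tau)=\left[\begin{smallmatrix}0&f(\sigma')\end{smallmatrix}\right]$, whence $f(\tau)=\left[\begin{smallmatrix}f(\sigma)&H\\0&f(\sigma')\end{smallmatrix}\right]$ with $H\in M_{n,n'}(\mathcal{B})$ the $(1,2)$-block. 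As $H$ is a designated block of $f(\tau)$, it is unique; set $(\widetilde{\Delta}^{(d;m)}_{u,v}f)(\sigma;\sigma')(X):=H$.

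\emph{Homogeneity.} Now suppose in addition $\tau_{r}:=(\sigma;\sigma')_{u,v}(rX)\in\Omega_{n+n'}$, with $(1,2)$-block $H_{r}$ (and write $H_{1}=H$). Running the same block computation with representative $N\widetilde{\oplus}N_{r}$, where $N_{r}:=(A;A')_{u,v}(rX)$, conjugating matrix $s=\left[\begin{smallmatrix}I_{n+n'}&S\\0&I_{n+n'}\end{smallmatrix}\right]$ with $S=\left[\begin{smallmatrix}I_{n}&0\\0&rI_{n'}\end{smallmatrix}\right]$, and $\gamma$ the unipotent matrix (block sizes $n,n',n,n'$) that adds block-column $1$ to block-column $3$ and $r$ times block-column $2$ to block-column $4$, one gets $s\cdot(\tau\oplus\tau_{r})=\tau\oplus\tau_{r}$. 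Property (I) of Proposition~\ref{prop_grassmann_intertwining} with $T=S$ then gives $f(\tau)S=Sf(\tau_{r})$; comparing the $(1,2)$-blocks, using the block forms of $f(\tau)$ and $f(\tau_{r})$ already found, yields $rH_{1}=H_{r}$, i.e.\ $(\widetilde{\Delta}^{(d;m)}_{u,v}f)(\sigma;\sigma')(rX)=r(\widetilde{\Delta}^{(d;m)}_{u,v}f)(\sigma;\sigma')(X)$.

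\emph{Main obstacle.} The only step that is not bookkeeping is the verification of the three Grassmannian similarity identities: in each case one must exhibit the matrix $\gamma$ realizing left multiplication by $s^{\oplus m}$ as right multiplication by $\gamma^{\oplus m}$ and confirm $\gamma^{\oplus m}\in H^{(d;m)}$. This goes through because the defining correction term of $(A;A')_{u,v}(\cdot)$ is confined to a single off-diagonal $M_{n}$-by-$M_{n'}$ block position, so the row operation encoded by $s$ and the column operation encoded by $\gamma$ act on it in a matching way; the remainder is a direct transcription of the argument of \cite{kvv14}.
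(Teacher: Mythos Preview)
Your proof is correct and follows essentially the same approach as the paper's: the two similarity identities you verify for the block form are exactly the paper's equations (\ref{eq_7}) and (\ref{eq_8}), and your homogeneity identity is a harmless variant of the paper's (\ref{eq_9}) (you use $S=I_{n}\oplus rI_{n'}$ with the pair $(\tau,\tau_{r})$, whereas the paper uses $rI_{n}\oplus I_{n'}$; note also that (\ref{eq_9}) as printed has both summands equal to $(\sigma;\sigma')_{u,v}(X)$, which appears to be a typo---one of them should be $(\sigma;\sigma')_{u,v}(rX)$, as your computation confirms). Your write-up in fact supplies the entry-by-entry verification that the paper leaves implicit.
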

\begin{proof}
    Combine Proposition \ref{prop_grassmann_intertwining} with the following equalities:
    \begin{gather}
        \left[
    \begin{smallmatrix}
        I_{n+n'}&T_{1}\\
        0&I_{n}
    \end{smallmatrix}
    \right]\cdot((\sigma;\sigma')_{u,v}(X)\oplus\sigma)=(\sigma;\sigma')_{u,v}(X)\oplus\sigma,\quad T_{1}=\left[\begin{smallmatrix}
        I_{n}\\
        0
    \end{smallmatrix}\right],\label{eq_7}\\
    \left[
    \begin{smallmatrix}
        I_{n'}&T_{2}\\
        0&I_{n+n'}
    \end{smallmatrix}
    \right]\cdot(\sigma'\oplus(\sigma;\sigma')_{u,v}(X))=\sigma'\oplus(\sigma;\sigma')_{u,v}(X),\quad T_{2}=\left[
    \begin{smallmatrix}
        0&I_{n'}
    \end{smallmatrix}
    \right],\label{eq_8}\\
    \left[
    \begin{smallmatrix}
        I_{n+n'}&rI_{n}\oplus I_{n'}\\
        0&I_{n+n'}
    \end{smallmatrix}
    \right]\cdot((\sigma;\sigma')_{u,v}(X)\oplus(\sigma;\sigma')_{u,v}(X))=(\sigma;\sigma')_{u,v}(X)\oplus(\sigma;\sigma')_{u,v}(X)\label{eq_9}.
    \end{gather}
\end{proof}

\begin{Def}
    Let $u\in[m-d]$ and $v\in[d]$, and let $\Omega$ be a $(u,v)$-admissible nc set over $Gr^{(d;m)}(\mathcal{A})$. Let $f$ be an nc function from $\Omega$ to $M(\mathcal{B})$. For any $\sigma\in\Omega_{n}$, $\sigma'\in\Omega_{n'}$ and $X\in M_{n,n'}(\mathcal{A})$, we define
    \[
    (\widetilde{\Delta}^{(d;m)}_{u,v}f)(\sigma;\sigma')(X):=r^{-1}(\widetilde{\Delta}^{(d;m)}_{u,v}f)(\sigma;\sigma')(rX),
    \]
    where $r\in\mathcal{R}$ is an invertible element such that $(\sigma;\sigma')_{u,v}(rX)\in\Omega_{n+n'}$. In the case of $(d;m)=(1;2)$, we simply write $\widetilde{\Delta}$ instead of $\widetilde{\Delta}^{(1;2)}_{1,1}$.
\end{Def}

It is easy to see that $(\widetilde{\Delta}^{(d;m)}_{u,v}f)(\sigma;\sigma')(X)$ is well defined, that is, it is independent of the choice of invertible elements $r\in\mathcal{R}$ such that $(\sigma;\sigma')_{u,v}(rX)\in\Omega_{n+n'}$. Hence, $(\widetilde{\Delta}^{(d;m)}_{u,v}f)(\sigma;\sigma')$ defines a well-defined map from $M_{n,n'}(\mathcal{A})$ to $M_{n,n'}(\mathcal{B})$ for each $n,n'\in\mathbb{Z}_{\geq1}$ and $\sigma\in\Omega_{n}$, $\sigma'\in\Omega_{n'}$.

\subsection{Properties}
In the sequel, we will see that $\widetilde{\Delta}^{(d;m)}_{u,v}$ is an $\mathcal{R}$-linear map from $\mathcal{T}^{(0)}(\Omega;\mathcal{B})$ to $\mathcal{T}^{(1)}(\Omega,\Omega;\mathcal{A},\mathcal{B})$. Here is one remark.

\begin{Rem}\label{rem_ddop_similari_enve}
    Let $\Omega$ be a $(u,v)$-admissible nc set over $Gr^{(d;m)}(\mathcal{A})$, and let $f$ be an nc function from $\Omega$ to $M(\mathcal{B})$. Then, it is easy to see that the similarity-invariant envelope $\widehat{\Omega}$ is also $(u,v)$-admissible. In particular, we have 
    $(\sigma;\sigma')_{u,v}(X)\in\widehat{\Omega}_{n+n'}$ 
    for any $\sigma\in\widehat{\Omega}_{n}$, $\sigma'\in\widehat{\Omega}_{n'}$ and $X\in M_{n,n'}(\mathcal{A})$. 
    Moreover, $(\widetilde{\Delta}^{(d;m)}_{u,v}\widehat{f})(\sigma;\sigma')(X)=(\widetilde{\Delta}^{(d;m)}_{u,v}f)(\sigma;\sigma')(X)$ for any $\sigma\in\Omega_{n}$, $\sigma'\in\Omega_{n'}$ and $X\in M_{n,n'}(\mathcal{A})$.

    Hence, it suffices to prove the statements in this section under the assumption that $\widehat{\Omega}=\Omega$. 
\end{Rem}

\begin{Lem}\label{lem_directsaumpreserving_ddop}
    Let $\Omega$ be a $(u,v)$-admissible nc set over $Gr^{(d;m)}(\mathcal{A})$ and $f$ an nc function from $\Omega$ to $M(\mathcal{B})$. Then, $(\widetilde{\Delta}^{(d;m)}_{u,v}f)(\sigma;\sigma')$ satisfies properties (D-0),(D-1) in Definition \ref{def_multivariable_nc}.
\end{Lem}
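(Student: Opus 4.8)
The plan is to reduce each of the two direct-sum identities to the defining property of $\widetilde{\Delta}^{(d;m)}_{u,v}$ together with the Grassmannian intertwining property (Proposition \ref{prop_grassmann_intertwining}) and the direct-sum-preserving property (D) of $f$. By Remark \ref{rem_ddop_similari_enve} we may and do assume $\widehat{\Omega}=\Omega$, so that $(\tau;\tau')_{u,v}(W)\in\Omega_{n+n'}$ for \emph{every} choice of $\tau\in\Omega_{n}$, $\tau'\in\Omega_{n'}$ and $W\in M_{n,n'}(\mathcal{A})$; this removes all the bookkeeping with the invertible scalars $r\in\mathcal{R}$ and lets us work directly with $(\widetilde{\Delta}^{(d;m)}_{u,v}f)(\tau;\tau')(W)$ as the unique off-diagonal block $H$ in $f((\tau;\tau')_{u,v}(W))=\left[\begin{smallmatrix} f(\tau)&H\\ 0&f(\tau')\end{smallmatrix}\right]$.

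For (D-0), fix $\sigma_{0}\in\Omega_{n_{0}}$, $\sigma_{0}'\in\Omega_{n_{0}'}$, $\sigma_{1}\in\Omega_{n_{1}}$ and $X\in M_{n_{0},n_{1}}(\mathcal{A})$, $X'\in M_{n_{0}',n_{1}}(\mathcal{A})$. I would first check the algebraic identity
\[
(\sigma_{0}\oplus\sigma_{0}';\sigma_{1})_{u,v}\!\left(\left[\begin{smallmatrix}X\\ X'\end{smallmatrix}\right]\right)
=\Pi\cdot\Bigl(\bigl((\sigma_{0};\sigma_{1})_{u,v}(X)\bigr)\oplus\sigma_{0}'\Bigr)
\]
up to a suitable permutation similarity $\Pi\in GL_{n_{0}+n_{1}+n_{0}'}(\mathcal{R})$ that reorders the summands from $(n_{0},n_{1},n_{0}')$ to $(n_{0},n_{0}',n_{1})$; this is a direct computation from the formula (\ref{eq_6}) defining $(\cdot;\cdot)_{u,v}$, using that $A\widetilde{\oplus}A'$ is block-wise a direct sum and that the correction term only involves the $(v,m-d+j)$-columns of the second representative. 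Applying the nc function $f$ to both sides, using property (S) to absorb $\Pi$ and property (D) to split the direct sum, and then comparing the off-diagonal blocks of the two $3\times3$ block matrices, one reads off exactly $(\widetilde{\Delta}^{(d;m)}_{u,v}f)(\sigma_{0}\oplus\sigma_{0}';\sigma_{1})\!\left(\left[\begin{smallmatrix}X\\ X'\end{smallmatrix}\right]\right)=\left[\begin{smallmatrix}(\widetilde{\Delta}^{(d;m)}_{u,v}f)(\sigma_{0};\sigma_{1})(X)\\ (\widetilde{\Delta}^{(d;m)}_{u,v}f)(\sigma_{0}';\sigma_{1})(X')\end{smallmatrix}\right]$. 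Alternatively, and perhaps more cleanly, one can avoid the permutation entirely by invoking the Grassmannian intertwining property directly with the rectangular matrices $T$ that pick out the relevant coordinate subspaces, exactly in the spirit of equations (\ref{eq_1})--(\ref{eq_3}) and (\ref{eq_7})--(\ref{eq_8}); I would verify $\left[\begin{smallmatrix}I&T\\ 0&I\end{smallmatrix}\right]\cdot(\rho\oplus\rho')=\rho\oplus\rho'$ for $\rho=(\sigma_{0}\oplus\sigma_{0}';\sigma_{1})_{u,v}(\left[\begin{smallmatrix}X\\ X'\end{smallmatrix}\right])$ and the appropriate smaller $\rho'$, and let Proposition \ref{prop_grassmann_intertwining} do the work.

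The argument for (D-1) is entirely parallel: the key identity is now
\[
(\sigma_{0};\sigma_{1}\oplus\sigma_{1}')_{u,v}\!\left(\left[\begin{smallmatrix}Y&Y'\end{smallmatrix}\right]\right)
=\Pi'\cdot\Bigl(\bigl((\sigma_{0};\sigma_{1})_{u,v}(Y)\bigr)\oplus\sigma_{1}'\;+\;(\text{cross correction})\Bigr),
\]
and here I expect the one genuine subtlety to arise, since in the second variable the correction term in (\ref{eq_6}) involves the columns $A'_{v,m-d+j}$ of the \emph{second} representative, so splitting $\sigma_{1}\oplus\sigma_{1}'$ produces both the diagonal blocks $(\sigma_{0};\sigma_{1})_{u,v}(Y)$, $(\sigma_{0};\sigma_{1}')_{u,v}(Y')$ and a priori an extra off-diagonal block coming from $X=\left[\begin{smallmatrix}Y&Y'\end{smallmatrix}\right]$ hitting the mixed columns; one must check that this extra block lies in $H^{(d;m)}$ (or is killed by the equivalence $\sim^{(d;m)}$), so that it does not contribute. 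This is the step I would write out most carefully. Once that is settled, applying $f$, using (D) and (S), and matching off-diagonal blocks yields (D-1). The whole proof is thus a matter of assembling (\ref{eq_6}), Proposition \ref{prop_grassmann_intertwining}, and the already-established properties (D), (S) of $f$; no new idea beyond those is needed.
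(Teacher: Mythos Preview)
Your ``alternative'' approach---applying Proposition \ref{prop_grassmann_intertwining} with explicit rectangular $T$'s that pick out the coordinate subspaces---is exactly what the paper does, and it works cleanly for both (D-0) and (D-1). The paper writes down four such intertwining identities (two for each property) and is done.

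Your \emph{primary} approach, however, has a genuine error. The displayed identity
\[
(\sigma_{0}\oplus\sigma_{0}';\sigma_{1})_{u,v}\!\left(\left[\begin{smallmatrix}X\\ X'\end{smallmatrix}\right]\right)
=\Pi\cdot\Bigl(\bigl((\sigma_{0};\sigma_{1})_{u,v}(X)\bigr)\oplus\sigma_{0}'\Bigr)
\]
cannot hold: the left-hand side depends on $X'$ while the right-hand side does not. No permutation similarity can fix this, because the element on the left has \emph{both} $\sigma_{0}$ and $\sigma_{0}'$ coupled to $\sigma_{1}$ through $X$ and $X'$, so it is not a direct sum (even in disguise) of a piece involving only $X$ and a piece not involving $\sigma_{1}$ at all. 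The correct relationship is not a similarity between Grassmannian points but an intertwining relation of the type $\left[\begin{smallmatrix}I&T\\0&I\end{smallmatrix}\right]\cdot(\rho\oplus\rho')=\rho\oplus\rho'$ linking the big element to each of the smaller ones $(\sigma_{0};\sigma_{1})_{u,v}(X)$ and $(\sigma_{0}';\sigma_{1})_{u,v}(X')$ separately---which is precisely your alternative.

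Finally, your anticipated ``genuine subtlety'' for (D-1) does not materialise once you commit to the intertwining route. The paper treats (D-1) with exactly the same mechanism as (D-0); the asymmetry you expect from the fact that the correction in (\ref{eq_6}) uses columns of the second representative is absorbed automatically by choosing the right $T$, and no check that an extra block lies in $H^{(d;m)}$ is needed. So drop the permutation idea, go with your alternative for both parts, and the proof is short.
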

\begin{proof}
    Combine Proposition \ref{prop_grassmann_intertwining} with the following equalities:
    \begin{equation}
        \begin{split}
            \begin{aligned}
                \,&
                \left[
                \begin{smallmatrix}
                    I_{n_{0}+n_{1}+n_{1}'}&T_{1}\\
                    0&I_{n_{0}+n_{1}}
                \end{smallmatrix}
                \right]
                \cdot\left((\sigma_{0};\sigma_{1}\oplus\sigma_{1}')_{u,v}
                (\left[\begin{smallmatrix}
                Y&Y'
                \end{smallmatrix}\right])\oplus(\sigma_{0};\sigma_{1})_{u,v}(Y)\right)=\\
                &\hspace{3cm}=(\sigma_{0};\sigma_{1}\oplus\sigma_{1}')_{u,v}
                (\left[\begin{smallmatrix}
                Y&Y'
                \end{smallmatrix}\right])\oplus(\sigma_{0};\sigma_{1})_{u,v}(Y),\quad T_{1}=
                \left[\begin{smallmatrix}
                        I_{n_{0}}&0\\
                        0&I_{n_{1}}\\
                        0&0
                    \end{smallmatrix}\right],
            \end{aligned}
        \end{split}
    \end{equation}
    \begin{equation}
        \begin{split}
            \begin{aligned}
                \,&
                \left[
                \begin{smallmatrix}
                    I_{n_{0}+n_{1}+n_{1}'} & T_{2} \\
                    0 & I_{n_{0}+n_{1}'}
                \end{smallmatrix}
                \right]\cdot\left((\sigma_{0};\sigma_{1}\oplus\sigma_{1}')_{u,v}(\left[\begin{smallmatrix}
                    Y&Y'
                \end{smallmatrix}\right])\oplus(\sigma_{0};\sigma_{1}')_{u,v}(Y')\right)=\\
                &\hspace{3cm}=(\sigma_{0};\sigma_{1}\oplus\sigma_{1}')_{u,v}(\left[\begin{smallmatrix}
                    Y&Y'
                \end{smallmatrix}\right])\oplus(\sigma_{0};\sigma_{1}')_{u,v}(Y'),\quad
                T_{2}=\left[
                \begin{smallmatrix}
                        I_{n_{0}}&0\\
                        0&0\\
                        0&I_{n_{1}'}
                    \end{smallmatrix}
                \right],
            \end{aligned}
        \end{split}
    \end{equation}
    \begin{equation}
        \begin{split}
            \begin{aligned}
                \,&
                \left[
                \begin{smallmatrix}
                    I_{n_{0}+n_{1}} & T_{3}\\
                    0 & I_{n_{0}+n_{0}'+n_{1}}
                \end{smallmatrix}
                \right]\cdot\left((\sigma_{0};\sigma_{1})_{u,v}(X)\oplus(\sigma_{0}\oplus\sigma_{0}';\sigma_{1})_{u,v}
                \left(\left[\begin{smallmatrix}
                    X\\
                    X'
                \end{smallmatrix}\right]\right)\right)=\\
                &\hspace{3cm}=(\sigma_{0};\sigma_{1})_{u,v}(X)\oplus(\sigma_{0}\oplus\sigma_{0}';\sigma_{1})_{u,v}
                \left(\left[\begin{smallmatrix}
                    X\\
                    X'
                \end{smallmatrix}\right]\right),\quad 
                T_{3}=\left[
                \begin{smallmatrix}
                        I_{n_{0}}&0&0\\
                        0&0&I_{n_{1}}
                    \end{smallmatrix}
                \right],
            \end{aligned}
        \end{split}
    \end{equation}
    \begin{equation}
        \begin{split}
            \begin{aligned}
                \,&
                \left[
                \begin{smallmatrix}
                    I_{n'_{0}+n_{1}} & T_{4}\\
                    0 & I_{n_{0}+n_{0}'+n_{1}}
                \end{smallmatrix}
                \right]\cdot\left((\sigma'_{0};\sigma_{1})_{u,v}(X')\oplus(\sigma_{0}\oplus\sigma_{0}';\sigma_{1})_{u,v}
                \left(\left[\begin{smallmatrix}
                    X\\
                    X'
                \end{smallmatrix}\right]\right)\right)=\\
                &\hspace{3cm}=(\sigma_{0}';\sigma_{1})_{u,v}(X')\oplus(\sigma_{0}\oplus\sigma_{0}';\sigma_{1})_{u,v}
                \left(\left[\begin{smallmatrix}
                    X\\
                    X'
                \end{smallmatrix}\right]\right),\quad
                T_{4}=\left[
                \begin{smallmatrix}
                        0&I_{n_{0}'}&0\\
                        0&0&I_{n_{1}}
                    \end{smallmatrix}
                \right]
            \end{aligned}
        \end{split}
    \end{equation}
    for any $\sigma_{0}\in\Omega_{n_{0}}$, $\sigma_{0}'\in\Omega_{n_{0}'}$, $\sigma_{1}\in\Omega_{n_{1}}$, $\sigma_{1}'\in\Omega_{n_{1}'}$ and $X\in M_{n_{0},n_{1}}(\mathcal{A})$, $X'\in M_{n'_{0},n_{1}}(\mathcal{A})$, $Y\in M_{n_{0},n_{1}}(\mathcal{A})$, $Y'\in M_{n_{0},n'_{1}}(\mathcal{A})$.
\end{proof}

From Lemma \ref{lem_directsaumpreserving_ddop}, we can see the following:
\begin{Prop}
    Let $\Omega$ be a $(u,v)$-admissible nc set over $Gr^{(d;m)}(\mathcal{A})$, and let $f$ be an nc function from $\Omega$ to $M(\mathcal{B})$. 
    Then, $\widetilde{\Delta}^{(d;m)}_{u,v}$ is an $\mathcal{R}$-linear map from $\mathcal{T}^{(0)}(\Omega;\mathcal{B})$ to $\mathcal{T}^{(1)}(\Omega,\Omega;\mathcal{A},\mathcal{B})$.
\end{Prop}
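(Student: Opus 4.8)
The plan is to reduce, via Remark~\ref{rem_ddop_similari_enve}, to the case $\widehat{\Omega}=\Omega$; then for all $\sigma\in\Omega_{n}$, $\sigma'\in\Omega_{n'}$, $X\in M_{n,n'}(\mathcal{A})$ we have $(\sigma;\sigma')_{u,v}(X)\in\Omega_{n+n'}$ and $(\widetilde{\Delta}^{(d;m)}_{u,v}f)(\sigma;\sigma')(X)$ is the unique $H\in M_{n,n'}(\mathcal{B})$ with $f\bigl((\sigma;\sigma')_{u,v}(X)\bigr)=\left[\begin{smallmatrix}f(\sigma)&H\\0&f(\sigma')\end{smallmatrix}\right]$. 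The assertion then splits into: (i) $\mathcal{T}^{(0)}(\Omega;\mathcal{B})$ is an $\mathcal{R}$-module and $\widetilde{\Delta}^{(d;m)}_{u,v}$ is $\mathcal{R}$-linear; and (ii) $\widetilde{\Delta}^{(d;m)}_{u,v}f\in\mathcal{T}^{(1)}(\Omega,\Omega;\mathcal{A},\mathcal{B})$, i.e.\ each $(\widetilde{\Delta}^{(d;m)}_{u,v}f)(\sigma;\sigma')$ is $\mathcal{R}$-linear on $M_{n,n'}(\mathcal{A})$ and conditions (D-0), (D-1), (S) of Definition~\ref{def_multivariable_nc} hold (gradedness being then automatic).

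Part (i) is short: the $\mathcal{R}$-module structure on $\mathcal{T}^{(0)}(\Omega;\mathcal{B})$ is routine since direct sums and conjugations are $\mathcal{R}$-linear and $\mathcal{R}$ acts centrally; and for $f_{1},f_{2}\in\mathcal{T}^{(0)}(\Omega;\mathcal{B})$, $r\in\mathcal{R}$, expanding $(rf_{1}+f_{2})\bigl((\sigma;\sigma')_{u,v}(X)\bigr)$ and comparing its $(1,2)$-block with that of $r\left[\begin{smallmatrix}f_{1}(\sigma)&(\widetilde{\Delta}^{(d;m)}_{u,v}f_{1})(\sigma;\sigma')(X)\\0&f_{1}(\sigma')\end{smallmatrix}\right]+\left[\begin{smallmatrix}f_{2}(\sigma)&(\widetilde{\Delta}^{(d;m)}_{u,v}f_{2})(\sigma;\sigma')(X)\\0&f_{2}(\sigma')\end{smallmatrix}\right]$ gives, by uniqueness of $H$, that $\widetilde{\Delta}^{(d;m)}_{u,v}(rf_{1}+f_{2})=r\,\widetilde{\Delta}^{(d;m)}_{u,v}f_{1}+\widetilde{\Delta}^{(d;m)}_{u,v}f_{2}$. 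In part (ii), conditions (D-0) and (D-1) are exactly Lemma~\ref{lem_directsaumpreserving_ddop}, and $\mathcal{R}$-homogeneity of each $(\widetilde{\Delta}^{(d;m)}_{u,v}f)(\sigma;\sigma')$ is contained in the defining lemma; what remains is (S) and additivity in $X$.

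For (S), I would first establish the representative-level identity
\[
(s_{0}\cdot\sigma;\,s_{1}\cdot\sigma')_{u,v}(X)=(s_{0}\oplus s_{1})\cdot\bigl((\sigma;\sigma')_{u,v}(s_{0}^{-1}Xs_{1})\bigr)
\]
for $s_{0}\in GL_{n}(\mathcal{R})$, $s_{1}\in GL_{n'}(\mathcal{R})$, where $s_{0}\oplus s_{1}\in GL_{n+n'}(\mathcal{R})$: picking $A\in\sigma$, $A'\in\sigma'$, one uses $s_{0}^{\oplus m}A\in s_{0}\cdot\sigma$, $s_{1}^{\oplus m}A'\in s_{1}\cdot\sigma'$, the identity $(s_{0}^{\oplus m}A)\widetilde{\oplus}(s_{1}^{\oplus m}A')=(s_{0}\oplus s_{1})^{\oplus m}(A\widetilde{\oplus}A')$, the relation $(s_{1}^{\oplus m}A')_{v,m-d+j}=s_{1}A'_{v,m-d+j}$, and the centrality $Xs_{1}A'_{v,m-d+j}=s_{0}(s_{0}^{-1}Xs_{1})A'_{v,m-d+j}$ to recognize the right-hand side inside (\ref{eq_6}). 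Applying condition (S) for $f$ to $f\bigl((s_{0}\cdot\sigma;s_{1}\cdot\sigma')_{u,v}(X)\bigr)=f\bigl((s_{0}\oplus s_{1})\cdot(\sigma;\sigma')_{u,v}(s_{0}^{-1}Xs_{1})\bigr)$ and comparing $(1,2)$-blocks then yields $(\widetilde{\Delta}^{(d;m)}_{u,v}f)(s_{0}\cdot\sigma;s_{1}\cdot\sigma')(X)=s_{0}(\widetilde{\Delta}^{(d;m)}_{u,v}f)(\sigma;\sigma')(s_{0}^{-1}Xs_{1})s_{1}^{-1}$.

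Finally, for additivity in $X$: applying (D-0) with $\sigma_{0}=\sigma_{0}'=\sigma$, $\sigma_{1}=\sigma'$ gives $(\widetilde{\Delta}^{(d;m)}_{u,v}f)(\sigma\oplus\sigma;\sigma')\bigl(\left[\begin{smallmatrix}X_{1}\\X_{2}\end{smallmatrix}\right]\bigr)=\left[\begin{smallmatrix}(\widetilde{\Delta}^{(d;m)}_{u,v}f)(\sigma;\sigma')(X_{1})\\(\widetilde{\Delta}^{(d;m)}_{u,v}f)(\sigma;\sigma')(X_{2})\end{smallmatrix}\right]$; applying (S), now proved, with $s_{0}=\left[\begin{smallmatrix}I_{n}&I_{n}\\0&I_{n}\end{smallmatrix}\right]$ — which fixes $\sigma\oplus\sigma$ by (\ref{eq_3}) with $s=I_{n}$ — and $s_{1}=I_{n'}$, and then expanding both sides again by (D-0), the top block gives $(\widetilde{\Delta}^{(d;m)}_{u,v}f)(\sigma;\sigma')(X_{1})=(\widetilde{\Delta}^{(d;m)}_{u,v}f)(\sigma;\sigma')(X_{1}-X_{2})+(\widetilde{\Delta}^{(d;m)}_{u,v}f)(\sigma;\sigma')(X_{2})$, which after renaming is additivity. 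Together with homogeneity this shows each $(\widetilde{\Delta}^{(d;m)}_{u,v}f)(\sigma;\sigma')\in\mathrm{Hom}_{\mathcal{R}}(M_{n,n'}(\mathcal{A}),M_{n,n'}(\mathcal{B}))$, so $\widetilde{\Delta}^{(d;m)}_{u,v}f$ is graded and lies in $\mathcal{T}^{(1)}(\Omega,\Omega;\mathcal{A},\mathcal{B})$. The only genuinely computational step is the representative identity feeding (S) (together with the four block equalities of Lemma~\ref{lem_directsaumpreserving_ddop}); these are of the same elementary nature as the equalities already verified in this section, so I do not anticipate a real obstacle.
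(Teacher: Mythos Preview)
Your argument is correct, but your route to (S) and to additivity differs from the paper's. The paper proves every sub-step uniformly via the Grassmannian intertwining property (Proposition~\ref{prop_grassmann_intertwining}): for (S) it exhibits the fixed-point equality
\[
\left[\begin{smallmatrix}I_{n+n'}&s\oplus s'\\0&I_{n+n'}\end{smallmatrix}\right]\cdot\bigl((s\cdot\sigma;s'\cdot\sigma')_{u,v}(X)\oplus(\sigma;\sigma')_{u,v}(s^{-1}Xs')\bigr)=(s\cdot\sigma;s'\cdot\sigma')_{u,v}(X)\oplus(\sigma;\sigma')_{u,v}(s^{-1}Xs'),
\]
and for additivity it uses the analogous fixed-point with $T_{1}=\left[\begin{smallmatrix}I_{n}&I_{n}&0\\0&0&I_{n'}\end{smallmatrix}\right]$ relating $(\sigma;\sigma')_{u,v}(X+Y)$ to $(\sigma\oplus\sigma;\sigma')_{u,v}\bigl(\left[\begin{smallmatrix}X\\Y\end{smallmatrix}\right]\bigr)$, combined with Lemma~\ref{lem_directsaumpreserving_ddop}. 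You instead prove (S) by the direct representative-level identity $(s_{0}\cdot\sigma;s_{1}\cdot\sigma')_{u,v}(X)=(s_{0}\oplus s_{1})\cdot(\sigma;\sigma')_{u,v}(s_{0}^{-1}Xs_{1})$ and then invoke the \emph{similarity-preserving} property of $f$ itself rather than the intertwining reformulation; and you deduce additivity purely from the already-established (D-0) and (S), using the unipotent $s_{0}=\left[\begin{smallmatrix}I_{n}&I_{n}\\0&I_{n}\end{smallmatrix}\right]$ that fixes $\sigma\oplus\sigma$. Your approach is a legitimate and slightly more self-contained alternative: once (S) is in hand, additivity needs no new block identity. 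The paper's approach has the virtue of uniformity---every property, including those in Lemma~\ref{lem_directsaumpreserving_ddop}, flows from the single device of Proposition~\ref{prop_grassmann_intertwining}---which underscores the paper's thesis that the Grassmannian intertwining property is the right substitute for the affine one.
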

\begin{proof}
    The $\mathcal{R}$-homogeneity of $(\widetilde{\Delta}^{(d;m)}_{u,v}f)(\sigma;\sigma')$ can be shown by the same computation as in \cite[Proposition 2.4]{kvv14}. 
    
    We can also show the additivity, that is, 
    \[
    (\widetilde{\Delta}^{(d;m)}f)(\sigma;\sigma')(X+Y)=(\widetilde{\Delta}^{(d;m)}f)(\sigma;\sigma')(X)+(\widetilde{\Delta}^{(d;m)}f)(\sigma;\sigma')(Y)
    \] 
    for any $\sigma\in\Omega_{n}$, $\sigma'\in\Omega_{n'}$ and $X,Y\in M_{n,n'}(\mathcal{A})$, by combining Proposition \ref{prop_grassmann_intertwining} with the following equality
    \begin{equation}
        \begin{split}
            \begin{aligned}
                \,&\left[
                \begin{smallmatrix}
                    I_{n+n'} & T_{1} \\
                    0 & I_{2n+n'} 
                \end{smallmatrix}
                \right]\cdot\left((\sigma;\sigma')_{u,v}(X+Y)\oplus
                (\sigma\oplus\sigma;\sigma')_{u,v}\left(
                \left[\begin{smallmatrix}
                    X\\
                    Y
                \end{smallmatrix}\right]\right)
                \right)=\\
                &\hspace{2cm}=(\sigma;\sigma')_{u,v}(X+Y)\oplus
                (\sigma\oplus\sigma;\sigma')_{u,v}\left(
                \left[\begin{smallmatrix}
                    X\\
                    Y
                \end{smallmatrix}\right]\right),\quad
                T_{1}=\left[
                \begin{smallmatrix}
                        I_{n}&I_{n}&0\\
                        0&0&I_{n'}
                    \end{smallmatrix}
                \right]
            \end{aligned}
        \end{split}
    \end{equation}
    and, after that, using Lemma \ref{lem_directsaumpreserving_ddop}. 

    It remains to show that $(\widetilde{\Delta}^{(d;m)}_{u,v}f)(\sigma;\sigma')$ satisfies property (S) in Definition \ref{def_multivariable_nc}. This follows by using Proposition \ref{prop_grassmann_intertwining} with the following equality
    \begin{equation}
        \begin{split}
            \begin{aligned}
                \,&\left[
                \begin{smallmatrix}
                    I_{n+n'} & T_{2} \\
                    0 & I_{n+n'}
                \end{smallmatrix}
                \right]\cdot\left((s\cdot\sigma;s'\cdot\sigma')_{u,v}(X)\oplus(\sigma;\sigma')_{u,v}(s^{-1}Xs')\right)=\\
                &\hspace{2cm}=(s\cdot\sigma;s'\cdot\sigma')_{u,v}(X)\oplus(\sigma;\sigma')_{u,v}(s^{-1}Xs'),\quad 
                T_{2}=\left[
                \begin{smallmatrix}
                        s&0\\
                        0&s'
                    \end{smallmatrix}
                \right].
            \end{aligned}
        \end{split}
    \end{equation}
\end{proof}

For each $(u,v)\in[m-d]\times[d]$, we call $\widetilde{\Delta}^{(d;m)}_{u,v}$ the \textit{nc $(u,v)$-difference-differential operator}.

\subsection{A generalization of the first order difference formulae}

For any $\sigma\in\Omega_{n}$ and $Y\in M_{m-d,d}(M_{n}(\mathcal{B}))$, we define 
\[\sigma(Y):=
\left[
\begin{smallmatrix}
    I_{d}\otimes I_{n}&0\\
    -Y&I_{m-d}\otimes I_{n}
\end{smallmatrix}
\right]\sigma.
\]
The following is a generalization of \cite[Theorem 2.10]{kvv14} to the present framework of the nc $(d;m)$-Grassmannian.
\begin{Prop}\label{prop_expansion}
    Let $\Omega$ be an nc set over $Gr^{(d;m)}(\mathcal{A})$, $f$ an nc function from $\Omega$ to $M(\mathcal{B})$ and $\sigma\in\Omega_{n}$. Then, we have
    \[
    f(\sigma)-f\left(\sigma(e^{(m-d,d)}_{u,v}\otimes X)\right)=(\widetilde{\Delta}^{(d;m)}_{u,v}f)\left(\sigma;\sigma(e^{(m-d,d)}_{u,v}\otimes X)\right)(X)
    \]
    for any $X\in M_{n}(\mathcal{A})$ with $\sigma(e^{(m-d,d)}_{u,v}\otimes X)\in\Omega_{n}$ and $((\sigma;\sigma(e^{(m-d,d)}_{u,v}\otimes X))_{u,v}(X)\in \Omega_{2n}$ with $(u,v)\in [m-d]\times[d]$.
\end{Prop}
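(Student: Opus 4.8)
The plan is to reduce the identity to the defining property of $\widetilde{\Delta}^{(d;m)}_{u,v}$ given in the Lemma, namely that
\[
f\bigl((\tau;\tau')_{u,v}(X)\bigr)=\left[\begin{smallmatrix} f(\tau)&(\widetilde{\Delta}^{(d;m)}_{u,v}f)(\tau;\tau')(X)\\ 0&f(\tau')\end{smallmatrix}\right],
\]
applied with $\tau=\sigma$ and $\tau'=\sigma(e^{(m-d,d)}_{u,v}\otimes X)$. The right-hand side of the claimed formula is, by construction, the $(1,2)$-block of $f\bigl((\sigma;\tau')_{u,v}(X)\bigr)$, so it suffices to show that the $(1,1)$- and $(2,2)$-blocks of $f$ evaluated there reproduce $f(\sigma)$ and $f(\tau')$ respectively (which is automatic from the Lemma once $(\sigma;\tau')_{u,v}(X)\in\Omega_{2n}$), and then to relate the off-diagonal block to $f(\sigma)-f(\tau')$ by exhibiting an appropriate similarity. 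Concretely, the key step is to find $g\in GL_{m}(M_{2n}(\mathcal{A}))$ (built from $\left[\begin{smallmatrix} I_{n}&0\\ I_{n}&I_{n}\end{smallmatrix}\right]$-type blocks and the matrix $\left[\begin{smallmatrix} I_d\otimes I_n&0\\ -e^{(m-d,d)}_{u,v}\otimes X&I_{m-d}\otimes I_n\end{smallmatrix}\right]$ used to define $\sigma(Y)$) such that
\[
g\cdot\bigl((\sigma;\tau')_{u,v}(X)\bigr)=\tau'\oplus\sigma,
\]
i.e.\ a representative-level computation showing that the upper-triangular $2\times2$ (in the $n$-grading) element $(A;A')_{u,v}(X)$, after left multiplication by a suitable $\Gamma$-type matrix and regrouping, becomes the block-diagonal $A'\widetilde\oplus A$ up to $\sim^{(d;m)}_{2n}$.

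First I would fix representatives: take $A\in\sigma$, and for $\tau'=\sigma(e^{(m-d,d)}_{u,v}\otimes X)$ take the representative $A'=\left[\begin{smallmatrix} I_d\otimes I_n&0\\ -e^{(m-d,d)}_{u,v}\otimes X&I_{m-d}\otimes I_n\end{smallmatrix}\right]A$. Then I would write out $(A;A')_{u,v}(X)=A\widetilde\oplus A'+\sum_{j=1}^{d}\left[\begin{smallmatrix} 0_n&XA'_{v,m-d+j}\\ 0&0\end{smallmatrix}\right]\otimes e^{(m)}_{d+u,m-d+j}$ explicitly, using that $A'_{v,m-d+j}=A_{v,m-d+j}-\delta$-terms coming from the $-e^{(m-d,d)}_{u,v}\otimes X$ factor; the point of choosing this particular $A'$ is that the correction columns $XA'_{v,m-d+j}$ are arranged so that the perturbation in $(A;A')_{u,v}(X)$ is exactly cancellable by conjugating/multiplying with the ``$I+$nilpotent'' matrix $\left[\begin{smallmatrix} I&0\\ -e^{(m-d,d)}_{u,v}\otimes(\ast)&I\end{smallmatrix}\right]$ acting in the $m$-direction, combined with the grading-swap $\left[\begin{smallmatrix} 0&I_n\\ I_n&0\end{smallmatrix}\right]$-type matrix acting in the $n$-direction. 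Establishing this cancellation — identifying the exact $g$ and verifying $g\cdot((\sigma;\tau')_{u,v}(X))=\tau'\oplus\sigma$ at the level of $\sim^{(d;m)}_{2n}$-classes — is the main obstacle, since it requires tracking how the $H^{(d;m)}_{2n}(\mathcal{A})$-ambiguity interacts with the off-diagonal perturbation and the block reshuffling between the $m$-grading and the $n$-grading.

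Once that similarity is in hand, I would apply the similarity-preserving property (S) of the nc function $f$ (or rather of $\widehat f$ on the similarity-invariant envelope, invoking Remark \ref{rem_ddop_similari_enve} to reduce to $\widehat\Omega=\Omega$) to conclude $f\bigl((\sigma;\tau')_{u,v}(X)\bigr)=g\,\bigl(f(\tau')\oplus f(\sigma)\bigr)\,g^{-1}$, and then read off the $2\times2$ block structure: the diagonal blocks match $f(\sigma)$ and $f(\tau')$ by the Lemma, and comparing the $(1,2)$-block of $g(f(\tau')\oplus f(\sigma))g^{-1}$ with $\left[\begin{smallmatrix} f(\sigma)&(\widetilde\Delta^{(d;m)}_{u,v}f)(\sigma;\tau')(X)\\ 0&f(\tau')\end{smallmatrix}\right]$ yields $(\widetilde\Delta^{(d;m)}_{u,v}f)(\sigma;\tau')(X)=f(\sigma)-f(\tau')$, which is the assertion. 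An alternative, possibly cleaner route — modeled on the proof of \cite[Theorem 2.10]{kvv14} — is to bypass the explicit $g$ by instead checking directly that the $3\times3$ (in $n$-grading) element obtained by taking $(\sigma;\tau')_{u,v}(X)\oplus\tau'$ (or $\oplus\sigma$) admits a further Grassmannian intertwiner $\left[\begin{smallmatrix} I&T\\ 0&I\end{smallmatrix}\right]$ with $T$ chosen so that Proposition \ref{prop_grassmann_intertwining} forces a linear relation among $f(\sigma)$, $f(\tau')$ and $(\widetilde\Delta^{(d;m)}_{u,v}f)(\sigma;\tau')(X)$; I would try this first and fall back to the explicit-$g$ computation only if the intertwiner is not transparent.
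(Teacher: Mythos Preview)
Your ``alternative, possibly cleaner route'' in the final paragraph is exactly what the paper does: it applies Proposition~\ref{prop_grassmann_intertwining} to the element $(\sigma;\tau')_{u,v}(X)\oplus\tau'\in\Omega_{3n}$ (where $\tau'=\sigma(e^{(m-d,d)}_{u,v}\otimes X)$) with the explicit intertwiner $T=\left[\begin{smallmatrix}I_{n}\\-I_{n}\end{smallmatrix}\right]$. The intertwining property then gives $\left[\begin{smallmatrix}f(\sigma)&H\\0&f(\tau')\end{smallmatrix}\right]\left[\begin{smallmatrix}I_{n}\\-I_{n}\end{smallmatrix}\right]=\left[\begin{smallmatrix}I_{n}\\-I_{n}\end{smallmatrix}\right]f(\tau')$, whose top row is precisely $f(\sigma)-H=f(\tau')$. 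So that route is both correct and complete; you should lead with it rather than present it as a fallback.

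By contrast, the main approach you describe has a genuine type error. The similarity-preserving property~(S) says $f(s\cdot\sigma)=s\,f(\sigma)\,s^{-1}$ only for $s\in GL_{2n}(\mathcal{R})$, acting via $s\cdot\sigma=s^{\oplus m}\sigma$. The $g$ you propose lives in $GL_{m}(M_{2n}(\mathcal{A}))$ --- in particular the factor $\left[\begin{smallmatrix}I_{d}\otimes I_{n}&0\\-e^{(m-d,d)}_{u,v}\otimes X&I_{m-d}\otimes I_{n}\end{smallmatrix}\right]$ involves $X\in M_{n}(\mathcal{A})$, not $\mathcal{R}$ --- so it is not a similarity in the sense of (S), and nc functions have no reason to respect left multiplication by such a $g$. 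Worse, the conjugation $g\,(f(\tau')\oplus f(\sigma))\,g^{-1}$ you write down is not even well-formed: $g$ is a $2mn\times 2mn$ matrix while $f(\tau')\oplus f(\sigma)\in M_{2n}(\mathcal{B})$. There is no salvage here, because the needed transformation genuinely depends on $X$ and hence cannot be realized as a similarity over $\mathcal{R}$; the intertwiner method is not just cleaner but the only one of your two routes that works.
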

\begin{proof}
    Combine Proposition \ref{prop_grassmann_intertwining} with the following equality
    \begin{align*}
    \,&\left[
    \begin{smallmatrix}
        I_{2n}&T
        \\
        0&I_{n}
    \end{smallmatrix}
    \right]\cdot\left(((\sigma;\sigma(e^{(m-d,d)}_{u,v}\otimes X))_{u,v}(X)\oplus
    \sigma(e^{(m-d,d)}_{u,v}\otimes X)\right)=\\
    &\hspace{2cm}=((\sigma;\sigma(e^{(m-d,d)}_{u,v}\otimes X))_{u,v}(X)\oplus
    \sigma(e^{(m-d,d)}_{u,v}\otimes X),\quad T=\left[\begin{smallmatrix}
            I_{n}\\
            -I_{n}
        \end{smallmatrix}
        \right].
    \end{align*}
\end{proof}

\section{The $(d;m)$-Grassmannian $\mathcal{B}$-resolvent $\widetilde{\mathfrak{R}}^{(d;m)}(\pi;\mathcal{B}|v,u)$ with respect to $(u,v)$}\label{section_Grassmann_resolvent}\label{section_resolvent}

In this section, we will generalize Voiculescu's resolvent set and resolvent function to those over the nc $(d;m)$-Grassmannian $Gr^{(d;m)}(\mathcal{A})$. 
Let $1\in\mathcal{B}\subset\mathcal{A}$ be an inclusion of unital $\mathcal{R}$-algebras.
For any $A,B\in M_{m}(M_{n}(\mathcal{A}))$, we set
\[
r^{(d;m)}(A;B):=
\left[
\begin{smallmatrix}
    A_{1,d+1}&\cdots&A_{1,m}&B_{1,m-d+1}&\cdots&B_{1,m}\\
    \vdots&\ddots&\vdots&\vdots&\ddots&\vdots\\
    A_{m,d+1}&\cdots&A_{m,m}&B_{m,m-d+1}&\cdots&B_{m,m}
\end{smallmatrix}
\right].
\]

\begin{Def}
    Let $\pi\in Gr^{(m-d;m)}_{n}(\mathcal{A})$ and $\sigma\in Gr^{(d;m)}_{n}(\mathcal{A})$. We say that $\pi$ and $\sigma$ are \textit{transversal in $\mathcal{B}$} if $r^{(d;m)}(A;B)$ is invertible in $M_{m}(M_{n}(\mathcal{B}))$ for some $A\in\pi$ and $B\in\sigma$.
\end{Def}

This is a generalization of one introduced in \cite[subsection 4.5]{v10}. By the definition of the equivalence relations $\sim^{(d;m)}_{n}$ and $\sim^{(m-d;m)}_{n}$, it follows that the transversality of $\pi$ and $\sigma$ in $\mathcal{B}$ is independent of the choice of $A\in\pi$ and $B\in\sigma$. That is, if there exist $A\in\pi$ and $B\in\sigma$ such that $r^{(d;m)}(A;B)$ is invertible in $M_{m}(M_{n}(\mathcal{B}))$, then so is $r^{(d;m)}(A';B')$ for any $A'\in\pi$ and $B'\in\sigma$. 

\begin{Def}
    For an element $\pi\in Gr^{(m-d;m)}_{1}(\mathcal{A})$, we define $\widetilde{\rho}^{(d;m)}(\pi;\mathcal{B}):=\bigsqcup_{n\geq1}\widetilde{\rho}^{(d;m)}_{n}(\pi;\mathcal{B})$, where
    \[
    \widetilde{\rho}^{(d;m)}_{n}(\pi;\mathcal{B})
    =\left\{
    \sigma\in Gr^{(d;m)}_{n}(\mathcal{B})\,\middle|\,\pi^{\oplus n}:=\underbrace{\pi\oplus\cdots\oplus\pi}_{n}\mbox{ and }\sigma\mbox{ are transversal in }\mathcal{A}
    \right\}.
    \]
    We call $\widetilde{\rho}^{(d;m)}_{n}(\pi;\mathcal{B})$ and  $\widetilde{\rho}^{(d;m)}(\pi;\mathcal{B})$ the \textit{$(d;m)$-Grassmannian $n$-th $\mathcal{B}$-resolvent set} and the \textit{$(d;m)$-Grassmannian full $\mathcal{B}$-resolvent set} of $\pi$, respectively. In the case of $(d;m)=(1;2)$, we will simply write $\widetilde{\rho}(\pi;\mathcal{B})$ instead of $\widetilde{\rho}^{(1;2)}(\pi;\mathcal{B})$.
\end{Def}

We can easily see that $\widetilde{\rho}^{(d;m)}(\pi;\mathcal{B})$ is not only an nc set over $Gr^{(d;m)}(\mathcal{B})$, but also enjoys the following stronger properties:
\begin{Prop}\label{lem_resolvent_fuuly_matricial}
    The $(d;m)$-Grassmannian full $\mathcal{B}$-resolvent set $\widetilde{\rho}^{(d;m)}(\pi;\mathcal{B})$ has the following properties:
    \begin{enumerate}
        \item For any $n\in\mathbb{Z}_{\geq1}$, $\widetilde{\rho}^{(d;m)}_{n}(\pi;\mathcal{B})\subset Gr^{(d;m)}_{n}(\mathcal{B})$.
        \item For any $n,n'\in\mathbb{Z}_{\geq1}$, $\widetilde{\rho}^{(d;m)}_{n}(\pi;\mathcal{B})\oplus\widetilde{\rho}^{(d;m)}_{n'}(\pi;\mathcal{B})=\widetilde{\rho}^{(d;m)}_{n+n'}(\pi;\mathcal{B})\cap \left(Gr^{(d;m)}_{n}(\mathcal{A})\oplus Gr^{(d;m)}_{n'}(\mathcal{A})\right)$.
        \item For any $n\in\mathbb{Z}_{\geq1}$ and $s\in GL_{n}(\mathcal{R})$, $s\cdot\widetilde{\rho}^{(d;m)}_{n}(\pi;\mathcal{B})=\widetilde{\rho}^{(d;m)}_{n}(\pi;\mathcal{B})$.
    \end{enumerate}
\end{Prop}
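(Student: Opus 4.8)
The plan is to verify the three properties directly from the definition of $\widetilde{\rho}^{(d;m)}(\pi;\mathcal{B})$, using the already-established fact that transversality in $\mathcal{A}$ is independent of the choice of representatives. Property (1) is immediate, since by definition $\widetilde{\rho}^{(d;m)}_{n}(\pi;\mathcal{B})$ is a subset of $Gr^{(d;m)}_{n}(\mathcal{B})$.

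For property (3), fix $s\in GL_{n}(\mathcal{R})$ and $\sigma\in\widetilde{\rho}^{(d;m)}_{n}(\pi;\mathcal{B})$. Pick $A\in\pi^{\oplus n}$ and $B\in\sigma$ with $r^{(d;m)}(A;B)$ invertible in $M_{m}(M_{n}(\mathcal{A}))$. First I would observe that $s$ acts centrally (it lies in $GL_n(\mathcal{R})$ and $\mathcal{R}$ acts centrally), so $s^{\oplus m}\cdot A\cdot (s^{\oplus m})^{-1}$ is again a representative of $\pi^{\oplus n}$ because $\pi^{\oplus n}$ is similarity-invariant in the appropriate sense (or, more simply, $\pi$ has scalar entries so conjugation by $s^{\oplus m}$ does nothing up to the equivalence relation once one passes through $H^{(m-d;m)}_n$; the cleanest route is to note $s\cdot\pi^{\oplus n}=\pi^{\oplus n}$ since $\pi\in Gr^{(m-d;m)}_1(\mathcal{A})$ and $s^{\oplus m}$ commutes with scalar-entry matrices). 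Meanwhile $s^{\oplus m}B(s^{\oplus m})^{-1}\in s\cdot\sigma$. Then $r^{(d;m)}(s^{\oplus m}A(s^{\oplus m})^{-1};s^{\oplus m}B(s^{\oplus m})^{-1})=s^{\oplus m}\,r^{(d;m)}(A;B)\,(s^{\oplus m})^{-1}$ by a direct entry-wise check, which is again invertible in $M_m(M_n(\mathcal{A}))$. Hence $s\cdot\sigma\in\widetilde{\rho}^{(d;m)}_n(\pi;\mathcal{B})$; applying the same argument to $s^{-1}$ gives the reverse inclusion, so $s\cdot\widetilde{\rho}^{(d;m)}_n(\pi;\mathcal{B})=\widetilde{\rho}^{(d;m)}_n(\pi;\mathcal{B})$.

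For property (2), the inclusion $\subset$ amounts to checking that if $\sigma\in\widetilde{\rho}^{(d;m)}_n(\pi;\mathcal{B})$ and $\sigma'\in\widetilde{\rho}^{(d;m)}_{n'}(\pi;\mathcal{B})$ then $\sigma\oplus\sigma'$ lands in $Gr^{(d;m)}_n(\mathcal{A})\oplus Gr^{(d;m)}_{n'}(\mathcal{A})$ (clear, and in fact it lands in $Gr^{(d;m)}_n(\mathcal{B})\oplus Gr^{(d;m)}_{n'}(\mathcal{B})$) and is transversal to $\pi^{\oplus(n+n')}$ in $\mathcal{A}$. Here I would take representatives $A\in\pi^{\oplus n}$, $A'\in\pi^{\oplus n'}$, $B\in\sigma$, $B'\in\sigma'$ with $r^{(d;m)}(A;B)$ and $r^{(d;m)}(A';B')$ invertible, note $A\widetilde{\oplus}A'\in\pi^{\oplus(n+n')}$ and $B\widetilde{\oplus}B'\in\sigma\oplus\sigma'$, and then show that $r^{(d;m)}(A\widetilde{\oplus}A';B\widetilde{\oplus}B')$ is, after a permutation of the $n+n'$ ``inner'' coordinates conjugating it into block form, the block direct sum $r^{(d;m)}(A;B)\oplus r^{(d;m)}(A';B')$ inside $M_m(M_{n+n'}(\mathcal{A}))$, hence invertible. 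For the reverse inclusion $\supset$, suppose $\sigma''\in\widetilde{\rho}^{(d;m)}_{n+n'}(\pi;\mathcal{B})$ can be written as $\sigma\oplus\sigma'$ with $\sigma\in Gr^{(d;m)}_n(\mathcal{A})$, $\sigma'\in Gr^{(d;m)}_{n'}(\mathcal{A})$; I would use the block-diagonal description of $r^{(d;m)}(A\widetilde{\oplus}A';B\widetilde{\oplus}B')$ just obtained to argue that invertibility of the whole forces invertibility of each diagonal block (a lower/upper-triangular or genuinely block-diagonal matrix over $M_m(M_n(\mathcal{A}))$ is invertible iff its diagonal blocks are), which gives transversality of $\sigma$ and $\sigma'$ individually with $\pi^{\oplus n}$ and $\pi^{\oplus n'}$; one must also check $\sigma,\sigma'$ actually have representatives over $\mathcal{B}$, which again follows from the transversality data.

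The main obstacle is the bookkeeping in property (2): one must carefully track how $r^{(d;m)}(-;-)$ interacts with $\widetilde{\oplus}$ — i.e., exhibit the explicit permutation matrix (a shuffle of the tensor factors $M_{n+n'}=M_n\oplus M_{n'}$ across the $m$ outer indices) that conjugates $r^{(d;m)}(A\widetilde{\oplus}A';B\widetilde{\oplus}B')$ into $r^{(d;m)}(A;B)\oplus r^{(d;m)}(A';B')$ — and then invoke the standard fact that a conjugate of a block-diagonal matrix is invertible iff each block is. Everything else is a routine entry-wise computation together with the representative-independence already proved. I would therefore organize the proof as: (1) trivial; (3) the central-conjugation argument above; (2) the shuffle-into-block-form lemma followed by the ``block-diagonal invertible iff blocks invertible'' observation in both directions.
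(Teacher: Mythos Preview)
The paper does not actually give a proof of this proposition; it simply prefaces the statement with ``We can easily see that $\widetilde{\rho}^{(d;m)}(\pi;\mathcal{B})$ is not only an nc set over $Gr^{(d;m)}(\mathcal{B})$, but also enjoys the following stronger properties,'' and leaves all three items unproved. Your proposal supplies exactly the routine entry-wise verification the paper is tacitly invoking: (1) is immediate from the definition; (3) follows because $a^{\widetilde{\oplus}n}$ has scalar blocks, so $s^{\oplus m}$ commutes with it and $r^{(d;m)}(\,\cdot\,;\,\cdot\,)$ is conjugated by $s^{\oplus m}$; and (2) reduces to the observation that $r^{(d;m)}(A\widetilde{\oplus}A';B\widetilde{\oplus}B')$ is, up to a shuffle permutation, the block direct sum $r^{(d;m)}(A;B)\oplus r^{(d;m)}(A';B')$, so invertibility of the whole is equivalent to invertibility of each block. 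This is precisely the kind of direct calculation the paper treats as evident, so your approach is the intended one.

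One small caution on the reverse inclusion in (2): your remark that ``$\sigma,\sigma'$ actually have representatives over $\mathcal{B}$, which again follows from the transversality data'' is the only step that is not purely mechanical, and you should be careful there. Transversality in $\mathcal{A}$ by itself does not produce $\mathcal{B}$-representatives; what you actually need is that if $\sigma\oplus\sigma'$ lies in $Gr^{(d;m)}_{n+n'}(\mathcal{B})$ and is of the form $B\widetilde{\oplus}B'$ with $B,B'$ over $\mathcal{A}$, then each factor already lies in $Gr^{(d;m)}(\mathcal{B})$. This comes from the block structure (a $\mathcal{B}$-representative of $\sigma\oplus\sigma'$ differs from $B\widetilde{\oplus}B'$ by an element of $H^{(d;m)}_{n+n'}(\mathcal{A})$, and the block-diagonal shape forces that element, and hence each of $B,B'$ after adjustment, to be over $\mathcal{B}$), not from transversality. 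Aside from this bookkeeping point your plan is correct and complete.
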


We are now in a position to define the resolvent function on $\widetilde{\rho}^{(d;m)}(\pi;\mathcal{B})$.
\begin{Def}
    Let $(u,v)\in[m-d]\times[d]$, and let $\pi$ be an element of $Gr^{(m-d;m)}_{1}(\mathcal{A})$. The \textit{$(d;m)$-Grassmannian $\mathcal{B}$-resolvent with respect to $(v,u)$} of $\pi$, denoted by $\widetilde{\mathfrak{R}}^{(d;m)}(\pi;\mathcal{B}|v,u)$, is the map from $\widetilde{\rho}^{(d;m)}(\pi;\mathcal{B})$ to $M(\mathcal{A})$ defined by 
    \[\widetilde{\mathfrak{R}}^{(d;m)}(\pi;\mathcal{B}|v,u)(\sigma)=[B_{v,m-d+1},B_{v,m-d+2},\dots,B_{v,m}]\times\zeta_{u}\] 
    for any $\sigma\in\widetilde{\rho}^{(d;m)}(\pi;\mathcal{B})$. Here, $a=[a_{i,j}]\in\pi$, $B=[B_{i,j}]\in\sigma$, where $a_{i,j}\in\mathcal{A}$ and $B_{i,j}\in M_{n}(\mathcal{A})$, and 
    \[
    r^{(d;m)}(a^{\widetilde{\oplus}n};B)^{-1}=
    \left[
    \begin{smallmatrix}
        \ast&\ast&\cdots&\ast\\
        \ast&\zeta_{1}&\cdots&\zeta_{m-d}
    \end{smallmatrix}
    \right],\quad \zeta_{u}\in M_{d,1}(M_{n}(\mathcal{A}))\,(u\in[m-d]).
    \]
    In the case of $(d;m)=(1;2)$, we simply write $\widetilde{\mathfrak{R}}(\pi;\mathcal{B})$ instead of $\widetilde{\mathfrak{R}}^{(1;2)}(\pi;\mathcal{B}|1,1)$.
\end{Def}

It is easy to see that the $(d;m)$-Grassmannian $\mathcal{B}$-resolvents are well defined. Moreover, we have the following:

\begin{Prop}
    Let $\pi$ be an element of $Gr_{1}^{(d;m)}(\mathcal{A})$. Then, $\widetilde{\mathfrak{R}}^{(d;m)}(\pi;\mathcal{B}|v,u)$ is an nc function from $\widetilde{\rho}^{(d;m)}(\pi;\mathcal{B})$ to $M(\mathcal{A})$.
\end{Prop}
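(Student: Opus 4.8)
The plan is to verify the two defining conditions of an nc function from Definition~\ref{def_nc_subset_function}, namely (D) direct-sum-preserving and (S) similarity-preserving, for the map $f:=\widetilde{\mathfrak{R}}^{(d;m)}(\pi;\mathcal{B}|v,u)$. Gradedness is immediate from the construction, since for $\sigma\in\widetilde{\rho}^{(d;m)}_{n}(\pi;\mathcal{B})$ the row vector $[B_{v,m-d+1},\dots,B_{v,m}]$ has entries in $M_{n}(\mathcal{A})$ and $\zeta_{u}\in M_{d,1}(M_{n}(\mathcal{A}))$, so the product lands in $M_{n}(\mathcal{A})$. The whole argument reduces to understanding how the auxiliary matrix $r^{(d;m)}(a^{\widetilde{\oplus}n};B)$ — and hence its inverse — behaves under the direct sum $\sigma\oplus\sigma'$ and under the similarity $s\cdot\sigma$.

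First I would treat (D). Fix $\sigma\in\widetilde{\rho}^{(d;m)}_{n}(\pi;\mathcal{B})$ and $\sigma'\in\widetilde{\rho}^{(d;m)}_{n'}(\pi;\mathcal{B})$ with representatives $B\in\sigma$, $B'\in\sigma'$; then $B\widetilde{\oplus}B'$ is a representative of $\sigma\oplus\sigma'$. By the entrywise definition of $\widetilde{\oplus}$ and of $r^{(d;m)}$, one checks directly that $r^{(d;m)}(a^{\widetilde{\oplus}(n+n')};B\widetilde{\oplus}B')$ is, up to a fixed permutation of the underlying $(n+n')$-dimensional scalar indices (the shuffle that groups the first $n$ coordinates and the last $n'$ coordinates blockwise within each of the $m$ matrix slots), the block-diagonal matrix $r^{(d;m)}(a^{\widetilde{\oplus}n};B)\,\widetilde{\oplus}\,r^{(d;m)}(a^{\widetilde{\oplus}n'};B')$; consequently its inverse is the corresponding block-diagonal arrangement of the two inverses. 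Reading off the blocks $\zeta_{u}$ then gives $\zeta_{u}(\sigma\oplus\sigma')=\zeta_{u}(\sigma)\oplus\zeta_{u}(\sigma')$ (after undoing the shuffle), and since the row vector $[B_{v,\bullet}]$ for $B\widetilde{\oplus}B'$ is likewise the direct sum of the two row vectors, multiplying out yields $f(\sigma\oplus\sigma')=f(\sigma)\oplus f(\sigma')$. This part is a bookkeeping computation; the only subtlety is being careful that the permutation/shuffle is the same for $r^{(d;m)}$ as for all the ambient operations, so that it cancels.

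Next I would treat (S). Given $\sigma\in\widetilde{\rho}^{(d;m)}_{n}(\pi;\mathcal{B})$ and $s\in GL_{n}(\mathcal{R})$ with $s\cdot\sigma\in\widetilde{\rho}^{(d;m)}_{n}(\pi;\mathcal{B})$, a representative of $s\cdot\sigma$ is $s^{\oplus m}B$ for $B\in\sigma$; since $s\in GL_{n}(\mathcal{R})$ is central, one gets $r^{(d;m)}(a^{\widetilde{\oplus}n};s^{\oplus m}B)=(I_{m}\otimes s)\,r^{(d;m)}(a^{\widetilde{\oplus}n};B)\,(I_{m}\otimes s)^{-1}$ — here I would note that conjugating $a^{\widetilde{\oplus}n}$ by $I_m\otimes s$ leaves it unchanged up to the equivalence $\sim^{(m-d;m)}_n$, which is exactly why the resolvent set is similarity-invariant (Proposition~\ref{lem_resolvent_fuuly_matricial}(3)) and why the value is well defined. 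Inverting, $r^{(d;m)}(a^{\widetilde{\oplus}n};s^{\oplus m}B)^{-1}$ is the $(I_m\otimes s)$-conjugate of $r^{(d;m)}(a^{\widetilde{\oplus}n};B)^{-1}$, so the block $\zeta_{u}$ transforms as $\zeta_{u}(s\cdot\sigma)=s\,\zeta_{u}(\sigma)\,s^{-1}$, and the row vector picks up $[ (s^{\oplus m}B)_{v,\bullet} ]=s[B_{v,\bullet}]s^{-1}$ (again using centrality of $s$). Multiplying, $f(s\cdot\sigma)=s[B_{v,\bullet}]s^{-1}\cdot s\,\zeta_{u}(\sigma)\,s^{-1}=s\,f(\sigma)\,s^{-1}$, which is exactly property (S).

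The main obstacle is Step~2: getting the index-shuffle identity for $r^{(d;m)}(a^{\widetilde{\oplus}n};B\widetilde{\oplus}B')$ exactly right, i.e.\ pinning down that the $2m\times 2m$-type block matrix $r^{(d;m)}$ intertwines $\widetilde{\oplus}$ with the ordinary $\oplus$ on the ambient $M_{m}(M_{\bullet}(\mathcal{A}))$ via a permutation that is independent of $\sigma$ and that also acts compatibly on the output row vector and on $\pi^{\oplus(n+n')}$. Once this single compatibility is established, both (D) and (S) follow by routine linear algebra; alternatively, one can bypass it by invoking Proposition~\ref{prop_grassmann_intertwining} and checking the Grassmannian intertwining property (I) directly, which may be cleaner since it only requires producing, for each intertwiner $T$, an explicit computation that $r^{(d;m)}((a^{\widetilde\oplus})^{\oplus};\,\text{rep of }\sigma\oplus\sigma')$ is block-upper-triangularized in the appropriate way — but the direct (D)+(S) route above is the more transparent one to write down.
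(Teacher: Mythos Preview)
Your approach is essentially the paper's: verify (D) via the identity $r^{(d;m)}(a^{\widetilde{\oplus}(n+n')};B\widetilde{\oplus}B')=r^{(d;m)}(a^{\widetilde{\oplus}n};B)\,\widetilde{\oplus}\,r^{(d;m)}(a^{\widetilde{\oplus}n'};B')$ (which holds literally, so no shuffle bookkeeping is needed if you stay in the $\widetilde{\oplus}$ formalism), and verify (S) by a conjugation identity for $r^{(d;m)}$.

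There is one slip in your treatment of (S): the displayed equation $r^{(d;m)}(a^{\widetilde{\oplus}n};s^{\oplus m}B)=(I_{m}\otimes s)\,r^{(d;m)}(a^{\widetilde{\oplus}n};B)\,(I_{m}\otimes s)^{-1}$ is false with the representative $s^{\oplus m}B$, since its $B$-columns have entries $sB_{i,j}$ rather than $sB_{i,j}s^{-1}$. The paper avoids this by choosing the equivalent representative $\mathrm{Ad}(s^{\oplus m})[B]=s^{\oplus m}B(s^{-1})^{\oplus m}$ (legitimate because $(s^{-1})^{\oplus m}\in H^{(d;m)}_{n}(\mathcal{A})$), for which the conjugation identity $r^{(d;m)}(a^{\widetilde{\oplus}n};\mathrm{Ad}(s^{\oplus m})[B])=\mathrm{Ad}(s^{\oplus m})[r^{(d;m)}(a^{\widetilde{\oplus}n};B)]$ holds exactly, using $\mathrm{Ad}(s^{\oplus m})[a^{\widetilde{\oplus}n}]=a^{\widetilde{\oplus}n}$. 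With that correction your argument goes through verbatim and matches the paper's.
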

\begin{proof}
    Fix $\sigma\in\widetilde{\rho}^{(d;m)}_{n}(\pi;\mathcal{B})$ and $\sigma'\in\widetilde{\rho}^{(d;m)}_{n'}(\pi;\mathcal{B})$. Let $\zeta_{1},\dots,\zeta_{m-d}\in M_{d,1}(M_{n}(\mathcal{A}))$ and $\zeta'_{1},\dots,\zeta'_{m-d}\in M_{d,1}(M_{n'}(\mathcal{A}))$ satisfy
    \[
    r^{(d;m)}(a^{\widetilde{\oplus}n};B)^{-1}=\left[\begin{smallmatrix}
        \ast&\ast&\cdots&\ast\\
        \ast&\zeta_{1}&\cdots&\zeta_{m-d}
    \end{smallmatrix}\right],\quad
    r^{(d;m)}(a^{\widetilde{\oplus}n'};B')^{-1}=\left[\begin{smallmatrix}
        \ast&\ast&\cdots&\ast\\
        \ast&\zeta'_{1}&\cdots&\zeta'_{m-d}
    \end{smallmatrix}\right]
    \]
    for some $a\in\pi$, $B\in\sigma$ and $B'\in\sigma'$.
    
    \textit{Direct-sum-preserving} (D): We have $r^{(d;m)}(a^{\widetilde{\oplus}(n+n')};B\widetilde{\oplus}B')=r^{(d;m)}(a^{\widetilde{\oplus}n};B)\widetilde{\oplus}r^{(d;m)}(a^{\widetilde{\oplus}n'};B')$. Hence,
    \begin{align*}
        \widetilde{\mathfrak{R}}^{(d;m)}\left(\pi;\mathcal{B}|v,u\right)(\sigma\oplus\sigma')
        &=\left[
        \begin{smallmatrix}
            B_{v,m-d+1}&0\\
            0&B_{v,m-d+1}'
        \end{smallmatrix},\dots,
        \begin{smallmatrix}
            B_{v,m}&0\\
            0&B_{v,m}'
        \end{smallmatrix}
        \right]\times(\zeta_{u}\widetilde{\oplus}\zeta_{u}')\\
        &=\widetilde{\mathfrak{R}}^{(d;m)}\left(\pi;\mathcal{B}|v,u\right)(\sigma)\oplus\widetilde{\mathfrak{R}}^{(d;m)}\left(\pi;\mathcal{B}|v,u\right)(\sigma')
    \end{align*}
    as desired.

    \textit{Similarity-preserving} (S): Let $s\in GL_{n}(\mathcal{R})$ with $s\cdot\sigma\in\widetilde{\rho}^{(d;m)}_{n}(\pi;\mathcal{B})$. Since $\mathrm{Ad}(s^{\oplus m})[a^{\widetilde{\oplus}n}]=a^{\widetilde{\oplus}n}$, we see that
    \begin{align*}
        r^{(d;m)}(a^{\widetilde{\oplus}n};\mathrm{Ad}(s^{\oplus m})[B])^{-1}
        &=\left[\begin{smallmatrix}
            \ast&\ast&\cdots&\ast\\
            \ast&s^{\oplus d}\zeta_{1}s^{-1}&\cdots&s^{\oplus d}\zeta_{m-d}s^{-1}
        \end{smallmatrix}\right].
    \end{align*}
    Therefore,
    \begin{align*}
        \widetilde{\mathfrak{R}}^{(d;m)}(\pi;\mathcal{B}|v,u)(s\cdot\sigma)
        &=s\left[B_{v,m-d+1},\dots,B_{v,m}\right](s^{-1})^{\oplus d}\times s^{\oplus d}\zeta_{u}s^{-1}
        =\mathrm{Ad}(s)\left[\widetilde{\mathfrak{R}}^{(d;m)}(\pi;\mathcal{B}|v,u)(\sigma)\right]
    \end{align*}
    as desired.
\end{proof}

\section{More about the Grassmannian resolvent}\label{section_moreabout}
\subsection{A generalization of Voiculescu's resolvent equation}
We will establish a generalization of Voiculescu's resolvent equation \cite[Proposition 1.4]{v00},\cite[Lemma 6.1, Proposition 6.1]{v10} within the present framework of $Gr^{(d;m)}(\mathcal{A})$.

\begin{Thm}\label{thm_nescond}
    Let $(s,t),(u,v)\in[m-d]\times[d]$, and let $\pi\in Gr^{(m-d;m)}_{1}(\mathcal{A})$. Assume that $\widetilde{\rho}^{(d;m)}(\pi;\mathcal{B})$ is $(s,t)$-admissible. Then, we have 
    \begin{equation}
        \left(\widetilde{\Delta}^{(d;m)}_{s,t}\widetilde{\mathfrak{R}}^{(d;m)}(\pi;\mathcal{B}|v,u)\right)(\sigma;\sigma')(X)=-\widetilde{\mathfrak{R}}^{(d;m)}(\pi;\mathcal{B}|v,s)(\sigma)\cdot X\cdot \widetilde{\mathfrak{R}}^{(d;m)}(\pi;\mathcal{B}|t,u)(\sigma')\label{eq_14}
    \end{equation}
    for any $\sigma\in\widetilde{\rho}_{n}(\pi;\mathcal{B})$, $\sigma'\in\widetilde{\rho}_{n'}(\pi;\mathcal{B})$ and $X\in M_{n,n'}(\mathcal{B})$.
\end{Thm}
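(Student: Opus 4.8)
The plan is to evaluate $\widetilde{\mathfrak{R}}^{(d;m)}(\pi;\mathcal{B}|v,u)$ directly on the element $(\sigma;\sigma')_{s,t}(X)$ and read off its upper off-diagonal block, which by the very construction of $\widetilde{\Delta}^{(d;m)}_{s,t}$ in Section~\ref{section_ddoperator} equals $\big(\widetilde{\Delta}^{(d;m)}_{s,t}\widetilde{\mathfrak{R}}^{(d;m)}(\pi;\mathcal{B}|v,u)\big)(\sigma;\sigma')(X)$.

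First, a reduction. Both sides of (\ref{eq_14}) are $\mathcal{R}$-homogeneous of degree $1$ in $X$ (the left side by the homogeneity built into the definition of $\widetilde{\Delta}^{(d;m)}_{s,t}$, the right side trivially), so by $(s,t)$-admissibility of $\widetilde{\rho}^{(d;m)}(\pi;\mathcal{B})$ we may replace $X$ by a suitable $rX$ with $r\in\mathcal{R}$ invertible, and hence assume from the start that $(\sigma;\sigma')_{s,t}(X)\in\widetilde{\rho}^{(d;m)}_{n+n'}(\pi;\mathcal{B})$; then, by definition, $\big(\widetilde{\Delta}^{(d;m)}_{s,t}\widetilde{\mathfrak{R}}^{(d;m)}(\pi;\mathcal{B}|v,u)\big)(\sigma;\sigma')(X)$ is precisely the $(1,2)$-block of $\widetilde{\mathfrak{R}}^{(d;m)}(\pi;\mathcal{B}|v,u)\big((\sigma;\sigma')_{s,t}(X)\big)$.

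Fix representatives $a\in\pi$, $A\in\sigma$, $A'\in\sigma'$, so that $a^{\widetilde{\oplus}(n+n')}\in\pi^{\oplus(n+n')}$ and $(A;A')_{s,t}(X)\in(\sigma;\sigma')_{s,t}(X)$. The key structural observation is that, after the obvious reshuffling that turns the $(n+n')$-dimensional space into an outer $2\times 2$ block structure (writing $M_{n+n'}(\mathcal{A})$ as $2\times 2$ matrices over $M_{n},M_{n,n'},M_{n',n},M_{n'}$ and pulling that outside), the $m\times m$ matrix $\widetilde{R}:=r^{(d;m)}\big(a^{\widetilde{\oplus}(n+n')};(A;A')_{s,t}(X)\big)$ becomes block upper-triangular,
\[
\widetilde{R}\;\cong\;\left[\begin{smallmatrix} R & F\\ 0 & R'\end{smallmatrix}\right],\qquad R=r^{(d;m)}(a^{\widetilde{\oplus}n};A),\quad R'=r^{(d;m)}(a^{\widetilde{\oplus}n'};A').
\]
This uses that $a^{\widetilde{\oplus}(n+n')}$ has ``block-scalar'' entries $a_{i,k}I_{n+n'}=a_{i,k}I_n\oplus a_{i,k}I_{n'}$, and that the perturbation $(A;A')_{s,t}(X)-A\widetilde{\oplus}A'$ appearing in (\ref{eq_6}) is supported in the ``$n\to n'$'' corner of its block entries; concretely $F\in M_m(M_{n,n'}(\mathcal{A}))$ vanishes except for $F_{d+s,\,m-d+j}=XA'_{t,m-d+j}$, $j\in[d]$. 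Hence $\widetilde{R}^{-1}\cong\left[\begin{smallmatrix} R^{-1} & -R^{-1}FR'^{-1}\\ 0 & R'^{-1}\end{smallmatrix}\right]$, and a short computation of $FR'^{-1}$ followed by $R^{-1}(\cdot)$ — using that the distinguished rows/columns of $R^{-1},R'^{-1}$ that are hit are exactly the $\zeta$'s, and that $[A'_{t,m-d+1},\dots,A'_{t,m}]\zeta'_u=\widetilde{\mathfrak{R}}^{(d;m)}(\pi;\mathcal{B}|t,u)(\sigma')$ — shows that the distinguished block $\widetilde{\zeta}_u$ of $\widetilde{R}^{-1}$ (block-rows $m-d+1,\dots,m$ of block-column $d+u$) satisfies $\widetilde{\zeta}_u\cong\left[\begin{smallmatrix}\zeta_u & -\zeta_s\,X\,\widetilde{\mathfrak{R}}^{(d;m)}(\pi;\mathcal{B}|t,u)(\sigma')\\ 0 & \zeta'_u\end{smallmatrix}\right]$.

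Finally, since $v\le d<d+s$, row $v$ of $(A;A')_{s,t}(X)$ is unaffected by the perturbation, so $\big[(A;A')_{s,t}(X)_{v,m-d+1},\dots,(A;A')_{s,t}(X)_{v,m}\big]\cong\left[\begin{smallmatrix}[A_{v,m-d+1},\dots,A_{v,m}] & 0\\ 0 & [A'_{v,m-d+1},\dots,A'_{v,m}]\end{smallmatrix}\right]$; multiplying this by $\widetilde{\zeta}_u$ and using $[A_{v,m-d+1},\dots,A_{v,m}]\zeta_s=\widetilde{\mathfrak{R}}^{(d;m)}(\pi;\mathcal{B}|v,s)(\sigma)$ gives
\[
\widetilde{\mathfrak{R}}^{(d;m)}(\pi;\mathcal{B}|v,u)\big((\sigma;\sigma')_{s,t}(X)\big)\cong
\left[\begin{smallmatrix}\widetilde{\mathfrak{R}}^{(d;m)}(\pi;\mathcal{B}|v,u)(\sigma) & -\widetilde{\mathfrak{R}}^{(d;m)}(\pi;\mathcal{B}|v,s)(\sigma)\,X\,\widetilde{\mathfrak{R}}^{(d;m)}(\pi;\mathcal{B}|t,u)(\sigma')\\ 0 & \widetilde{\mathfrak{R}}^{(d;m)}(\pi;\mathcal{B}|v,u)(\sigma')\end{smallmatrix}\right],
\]
and comparing the $(1,2)$-block with the defining relation of $\widetilde{\Delta}^{(d;m)}_{s,t}$ yields (\ref{eq_14}). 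The main obstacle I expect is purely notational: keeping the ``inner'' $m\times m$ index structure of $r^{(d;m)}(\cdot\,;\cdot)$ rigorously separate from the ``outer'' $n/n'$ splitting of $M_{n+n'}$, and verifying that the $(A;A')_{s,t}(X)$-perturbation lands only in the upper off-diagonal corner (so that $\widetilde{R}^{-1}$ is again block upper-triangular with the clean two-term formula above). Everything after that identification is a direct, if slightly lengthy, matrix computation.
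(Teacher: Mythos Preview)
Your proposal is correct and follows essentially the same route as the paper: both proofs recognize that $r^{(d;m)}\big(a^{\widetilde{\oplus}(n+n')};(A;A')_{s,t}(X)\big)$ is, after the permutation that separates the $n$- and $n'$-blocks (your ``reshuffling'', the paper's conjugation by some $U\in GL_{m(n+n')}(\mathcal{R})$), block upper-triangular with diagonal blocks $r^{(d;m)}(a^{\widetilde{\oplus}n};A)$, $r^{(d;m)}(a^{\widetilde{\oplus}n'};A')$ and off-diagonal block supported in row $d+s$, then invert and read off the $(1,2)$-block of the resolvent. The only cosmetic difference is the reduction step: you scale $X$ by an invertible $r\in\mathcal{R}$ and invoke homogeneity of $\widetilde{\Delta}^{(d;m)}_{s,t}$, whereas the paper uses similarity-invariance of $\widetilde{\rho}^{(d;m)}(\pi;\mathcal{B})$ (Proposition~\ref{lem_resolvent_fuuly_matricial}(3), cf.\ Remark~\ref{rem_ddop_similari_enve}) to get $(\sigma;\sigma')_{s,t}(X)\in\widetilde{\rho}^{(d;m)}_{n+n'}(\pi;\mathcal{B})$ for \emph{every} $X$ directly.
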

\begin{proof}
    Fix $\sigma\in\widetilde{\rho}^{(d;m)}_{n}(\pi;\mathcal{B})$, $\sigma'\in\widetilde{\rho}^{(d;m)}_{n'}(\pi;\mathcal{B})$, and choose arbitrarily representatives $a=[a_{i,j}]_{i,j=1}^{m}\in\pi$, $B=[B_{i,j}]_{i,j=1}^{m}\in\sigma$, $B'=[B'_{i,j}]_{i,j=1}^{m}\in\sigma'$, where $a_{i,j}\in\mathcal{A}$, $B_{i,j}\in M_{n}(\mathcal{B})$ and $B_{i,j}'\in M_{n'}(\mathcal{B})$. By the $(s,t)$-admissibility and property (3) in Proposition \ref{lem_resolvent_fuuly_matricial}, we have $(\sigma;\sigma')_{s,t}(X)=(B;B')_{s,t}(X)/\sim^{(d;m)}_{n+n'}$ belongs to $\widetilde{\rho}_{n+n'}(\pi;\mathcal{B})$ for any $X\in M_{n,n'}(\mathcal{B})$ (see equation (\ref{eq_6})). Set
    \begin{equation}
    r^{(d;m)}(a^{\widetilde{\oplus}n};B)^{-1}
    =\left[
    \begin{smallmatrix}
        \ast&\ast&\cdots&\ast\\
        \ast&\zeta_{1}&\cdots&\zeta_{m-d}
    \end{smallmatrix}
    \right],\quad
    r^{(d;m)}(a^{\widetilde{\oplus}n};B')^{-1}
    =\left[
    \begin{smallmatrix}
        \ast&\ast&\cdots&\ast\\
        \ast&\zeta_{1}'&\cdots&\zeta_{m-d}'
    \end{smallmatrix}
    \right],\label{equation_resolvent}
    \end{equation}
    where $\zeta_{1},\dots,\zeta_{m-d}\in M_{d,1}(M_{n}(\mathcal{A}))$ and $\zeta_{1}',\dots,\zeta_{m-d}'\in M_{d,1}(M_{n'}(\mathcal{A}))$. Then, there exists an element $U\in GL_{m(n+n')}(\mathcal{R})$ such that
    \begin{align*}
        \,&
        r^{(d;m)}
        \left(
        a^{\widetilde{\oplus}(n+n')};(B;B')_{s,t}(X)
        \right)=
        U^{-1}
        \left[
        \begin{array}{c|c}
            r^{(d;m)}(a^{\widetilde{\oplus}n};B)&
            \begin{smallmatrix}
                0_{(d+s-1)n,(d+s-1)n'} & 0_{(d+s-1)n,dn'}\\
                0_{n,(m-d)n'}          & X[B'_{t,m-d+1},\dots,B_{t,m}']\\
                0_{(m-d-s)n,(m-d)n'}   & 0_{(m-d-s)n,dn'}
            \end{smallmatrix}
            \\ \hline
            0&r^{(d;m)}(a^{\widetilde{\oplus}n};B')
        \end{array}
        \right]U.
    \end{align*}
    Hence, combining this with equation (\ref{equation_resolvent}), we observe that
    \begin{align*}
        r^{(d;m)}
        \left(
        a^{\widetilde{\oplus}(n+n')};(B;B')_{s,t}(X)
        \right)^{-1}
        =
        \left[
        \begin{matrix}
            \ast&\ast&\cdots&\ast\\
            \ast&\eta_{1}&\cdots&\eta_{m-d}
        \end{matrix}
        \right],
    \end{align*}
    where 
    \[
    \eta_{j}
    =
    \left[\begin{smallmatrix}
        (\zeta_{j})_{1}&(-\zeta_{s}X\widetilde{\mathfrak{R}}^{(d;m)}(\pi;\mathcal{B}|t,j)(\sigma'))_{1}\\
        0&(\zeta'_{j})_{1}\\
        (\zeta_{j})_{2}&(-\zeta_{s}X\widetilde{\mathfrak{R}}^{(d;m)}(\pi;\mathcal{B}|t,j)(\sigma'))_{2}\\
        0&(\zeta'_{j})_{2}\\
        \vdots&\vdots\\
        (\zeta_{j})_{d}&(-\zeta_{s}X\widetilde{\mathfrak{R}}^{(d;m)}(\pi;\mathcal{B}|t,j)(\sigma'))_{d}\\
        0&(\zeta'_{j})_{d}\\
    \end{smallmatrix}\right],\quad
    v=
    \begin{bmatrix}
        (v)_{1}\\
        (v)_{2}\\
        \vdots\\
        (v)_{d}
    \end{bmatrix}\in M_{d,1}(M_{*}(\mathcal{A}))
    \]
    for each $v\in\left\{\zeta_{j},\zeta'_{j},-\zeta_{s}X\widetilde{\mathfrak{R}}^{(k,m)}(\pi;\mathcal{B}|t,j)(\sigma')\right\}$ with appropriate size $*$. Therefore, we obtain
    \begin{align*}
        \widetilde{\mathfrak{R}}^{(d;m)}(\pi;\mathcal{B}|v,u)\left((\sigma;\sigma')_{s,t}(X)\right)
        &=
        \left[
        \begin{smallmatrix}
            B_{v,m-d+1}&0&\cdots&B_{v,m}&0\\
            0&B_{v,m-d+1}'&\cdots&0&B_{v,m}'
        \end{smallmatrix}
        \right]\times\eta_{u}\\
        &=\left[
        \begin{smallmatrix}
            \widetilde{\mathfrak{R}}^{(d;m)}(\pi;\mathcal{B}|v,u)(\sigma)
            &-\widetilde{\mathfrak{R}}^{(d;m)}(\pi;\mathcal{B}|v,s)(\sigma)X\widetilde{\mathfrak{R}}^{(d;m)}(\pi;\mathcal{B}|t,u)(\sigma')\\
            0&\widetilde{\mathfrak{R}}^{(d;m)}(\pi;\mathcal{B}|v,u)(\sigma')
        \end{smallmatrix}
        \right].
    \end{align*}
    This implies the desired equality (\ref{eq_14}) by the definition of $\widetilde{\Delta}^{(d;m)}_{s,t}$.
\end{proof}

\subsection{Aspects of the spectral analysis of unbounded operators}
Let $\mathcal{H}$ be a (separable) complex Hilbert space, and let $1$ denote the identity operator on $\mathcal{H}$. 
A bounded operator $a\in B(\mathcal{H})$ is regarded as an affine element 
$\left[
\begin{smallmatrix}
    0&1\\
    1&a
\end{smallmatrix}
\right]/\sim^{(1;2)}_{1}$; that is, we identify $a$ with its graph $\{(1x,ax)=(x,ax)\,|\,x\in\mathcal{H}\}$.

Let $B_{0}(\mathcal{H})$ denote the set of all pure contractions on $\mathcal{H}$ (i.e., operators $a\in B(\mathcal{H})$ satisfying that $\|ax\|<\|x\|$ for all $x\in\mathcal{H}$), and let $C_{0}(\mathcal{H})$ denote the set of all densely defined closed operators on $\mathcal{H}$. The following is well known: \textit{There is a bijection $\Gamma$ from $B_{0}(\mathcal{H})$ onto $C_{0}(\mathcal{H})$ such that the domain of $\Gamma(a)$ is $\mathrm{ran}((1-a^*a)^{\frac{1}{2}})$ and $\Gamma(a)(1-a^*a)^{\frac{1}{2}}=a$. Equivalently, the graph of $t$ is given by $\{((1-a^*a)^{\frac{1}{2}}x,ax)\,|\,x\in\mathcal{H}\}$} (see e.g., \cite{k78}). Then, using \textit{Halmos' unitary dilation}, every element $t\in C_{0}(\mathcal{H})$ can be regarded as an element of $Gr_{1}(B(\mathcal{H}))=Gr^{(1;2)}_{1}(B(\mathcal{H}))$ via 
\[
\pi(t)=\left[
\begin{smallmatrix}
    -a^*&(1-a^*a)^{\frac{1}{2}}\\
    (1-aa^*)^{\frac{1}{2}}&a
\end{smallmatrix}
\right]/\sim^{(1;2)}_{1},\quad
\mbox{where $a\in B_{0}(\mathcal{H})$ satisfies $\Gamma(a)=t$}.
\]  

Let $\mathcal{B}$ be a unital subalgebra of $B(\mathcal{H})$.
For a densely defined closed operator $t\in C_{0}(\mathcal{H})$ corresponding to a pure contraction $a\in B_{0}(\mathcal{H})$, the Grassmannian $\mathcal{B}$-resolvent set of $t$ can be described in terms of the resolvent set $\rho(a;C^*(a,\mathcal{B}))$ of $a$ inside $C^*(a,\mathcal{B})$, the unital $C^*$-algebra generated by $a$ and $\mathcal{B}$. More precisely, \textit{for any $\beta\in\mathcal{B}$,
\begin{equation}\widetilde{\beta}\in\widetilde{\rho}(\pi(t);\mathcal{B})\quad\mbox{if and only if}\quad\beta\cdot(1-a^*a)^{\frac{1}{2}}\in\rho(a;C^*(a,\mathcal{B})).\label{equivalence_colsedresol_grassresol}
\end{equation}
In this case, $\beta-t$ admits a bounded inverse, which is explicitly given by 
\begin{equation}
    (\beta-t)^{-1}=\widetilde{\mathfrak{R}}(\pi(t);\mathcal{B})(\widetilde{\beta})=(1-a^*a)^{\frac{1}{2}}(\beta\cdot(1-a^*a)^{\frac{1}{2}}-a)^{-1},
\end{equation}}
This equivalence follows from a direct computation involving block matrices of the form:
\begin{equation}
\left[\begin{smallmatrix}
    b&1\\
    a&\beta
\end{smallmatrix}\right]
\left[\begin{smallmatrix}
    1&0\\
    -b&1
\end{smallmatrix}\right]=
\left[\begin{smallmatrix}
    0&1\\
    a-\beta b&\beta
\end{smallmatrix}\right],\quad\mbox{where }a,b\in\mathcal{A}.\label{equation_unboundedresolvent}
\end{equation}
This establishes a precise correspondence between the classical resolvent of an unbounded operator and the Grassmannian resolvent. 

Being inspired by this observation, we will show a partial converse of Theorem \ref{thm_nescond} in the case of $(d;m)=(1;2)$ as follows.
\begin{Thm}\label{thm_partial_sufcond}
    Let $\mathcal{R}$ be a unital commutative ring and $\mathcal{B}\subset\mathcal{A}$ an inclusion of unital $\mathcal{R}$-algebras. Let $\pi=
    \left[\begin{smallmatrix}
        \ast&b\\
        \ast&a
    \end{smallmatrix}\right]/\sim^{(1;2)}_{1}\in Gr_{1}(\mathcal{A})$, $b'\in\mathcal{A}$ an element such that $b'b=1$ and $f$ an nc function from an admissible nc set $\Omega$ over $Gr(\mathcal{B})$ to $M(\mathcal{A})$. Assume that for any $n\in\mathbb{Z}_{\geq1}$ and any $\beta\in\Omega\cap M_{n}(\mathcal{B})$ there exists an element $c(\beta)\in GL_{n}(\mathcal{A})$ such that $f(\widetilde{\beta})=(b\otimes I_{n})c(\beta)$. If $f$ satisfies the following difference-differential equation
    \begin{equation}
        \begin{split}
        \begin{aligned}
        \begin{cases}
            (\widetilde{\Delta}f)(\sigma;\sigma')(X)=-f(\sigma)Xf(\sigma')\mbox{ for any }\sigma,\sigma'\in\Omega,\\
            f(\widetilde{\beta_{0}})=
            b\cdot(\beta_{0}b-a)^{-1}
            \mbox{ for some }\beta_{0}\in\Omega\cap\mathcal{B},
        \end{cases}
        \end{aligned}
        \end{split}\label{differential_equation}
    \end{equation}
    then we have $\Omega\cap M(\mathcal{B})\subset\widetilde{\rho}(\pi;\mathcal{B})\cap M(\mathcal{B})$ and $f=\widetilde{\mathfrak{R}}(\pi;\mathcal{B})$ on $\Omega\cap M(\mathcal{B})$.
\end{Thm}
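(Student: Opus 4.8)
The plan is to boil the hypotheses down to a single affine identity, namely $c(\beta)^{-1}=\beta(b\otimes I_{n})-a\otimes I_{n}$ for every $\beta\in\Omega\cap M_{n}(\mathcal{B})$, from which both conclusions follow immediately. I would first unwind the target objects on affine elements: a direct block computation shows that, for $\beta\in M_{n}(\mathcal{B})$, the elements $\pi^{\oplus n}$ and $\widetilde{\beta}$ are transversal in $\mathcal{A}$ exactly when $\beta(b\otimes I_{n})-a\otimes I_{n}$ is invertible in $M_{n}(\mathcal{A})$, and that in that case
\[
\widetilde{\mathfrak{R}}(\pi;\mathcal{B})(\widetilde{\beta})=(b\otimes I_{n})\bigl(\beta(b\otimes I_{n})-a\otimes I_{n}\bigr)^{-1}.
\]
Thus, once the affine identity is established, the invertibility of $c(\beta)\in GL_{n}(\mathcal{A})$ yields $\widetilde{\beta}\in\widetilde{\rho}(\pi;\mathcal{B})$, and $f(\widetilde{\beta})=(b\otimes I_{n})c(\beta)$ becomes precisely $\widetilde{\mathfrak{R}}(\pi;\mathcal{B})(\widetilde{\beta})$.

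To produce a functional equation I would pass to the similarity‑invariant envelope $\widehat{\Omega}$, $\widehat{f}$ of Remark \ref{rem_similarity_envelope}. For affine elements $\widetilde{\beta}(X)=\widetilde{\beta-X}$, and for $\beta,\gamma\in\Omega\cap M_{n}(\mathcal{B})$ one checks directly that
\[
(\widetilde{\beta};\widetilde{\gamma})_{1,1}(\beta-\gamma)=s\cdot(\widetilde{\beta}\oplus\widetilde{\gamma}),\qquad s=\left[\begin{smallmatrix}I_{n}&I_{n}\\0&I_{n}\end{smallmatrix}\right]\in GL_{2n}(\mathcal{R}),
\]
so this element lies in $\widehat{\Omega}_{2n}$. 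Hence Proposition \ref{prop_expansion} is available over $\widehat{\Omega}$; combining it with Remark \ref{rem_ddop_similari_enve} and the first line of the system (\ref{differential_equation}) gives the resolvent identity
\[
f(\widetilde{\beta})-f(\widetilde{\gamma})=-f(\widetilde{\beta})\,(\beta-\gamma)\,f(\widetilde{\gamma}),\qquad \beta,\gamma\in\Omega\cap M_{n}(\mathcal{B}).
\]

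Then I would substitute $f(\widetilde{\beta})=(b\otimes I_{n})c(\beta)$ into this identity and left‑multiply by $b'\otimes I_{n}$; since $(b'\otimes I_{n})(b\otimes I_{n})=I_{n}$ by $b'b=1$, the leftmost factor cancels, and after multiplying by $c(\beta)^{-1}$ on the left and $c(\gamma)^{-1}$ on the right I obtain the affine relation $c(\beta)^{-1}-c(\gamma)^{-1}=(\beta-\gamma)(b\otimes I_{n})$. Specializing $\gamma=\beta_{0}\otimes I_{n}$, so that $\widetilde{\gamma}=\widetilde{\beta_{0}}^{\oplus n}$, property (D) together with the second line of (\ref{differential_equation}) gives $f(\widetilde{\beta_{0}}^{\oplus n})=(b\otimes I_{n})\bigl((\beta_{0}\otimes I_{n})(b\otimes I_{n})-a\otimes I_{n}\bigr)^{-1}$, whence $c(\beta_{0}\otimes I_{n})^{-1}=(\beta_{0}\otimes I_{n})(b\otimes I_{n})-a\otimes I_{n}$; feeding this back into the affine relation yields the desired $c(\beta)^{-1}=\beta(b\otimes I_{n})-a\otimes I_{n}$ for all $\beta\in\Omega\cap M_{n}(\mathcal{B})$.

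All the computations here are elementary; the main obstacle is bookkeeping rather than ideas. One has to fix carefully the identification of affine elements of $Gr(\mathcal{B})$ with matrices over $\mathcal{B}$, verify that $(\widetilde{\beta};\widetilde{\gamma})_{1,1}(\beta-\gamma)$ is genuinely a similarity of $\widetilde{\beta}\oplus\widetilde{\gamma}$ (so that Remark \ref{rem_ddop_similari_enve} legitimately turns Proposition \ref{prop_expansion} into the resolvent identity on $\Omega\cap M(\mathcal{B})$), and stay aware throughout that only the one‑sided relation $b'b=1$ is at hand — which is exactly why the factor $b\otimes I_{n}$ gets stripped off on the left at every step, and why the conclusion is restricted to affine elements over $\mathcal{B}$.
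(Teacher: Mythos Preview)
Your proof is correct and follows essentially the same route as the paper's: both reduce to the identity $c(\beta)^{-1}=\beta(b\otimes I_{n})-a\otimes I_{n}$ by combining a first-order difference formula with the differential equation and the initial condition at $\beta_{0}$. The only difference is packaging --- the paper passes to the affine nc function $c=(b'\otimes I_{n})f$ and invokes the affine difference formula of \cite{kvv14} for $c^{-1}$, while you stay in the Grassmannian framework and obtain the resolvent identity for $f$ directly from Proposition~\ref{prop_expansion} on $\widehat{\Omega}$; also, your $s$ has a harmless sign slip (take $s=\left[\begin{smallmatrix}I_{n}&-I_{n}\\0&I_{n}\end{smallmatrix}\right]$ to get $(\widetilde{\beta};\widetilde{\gamma})_{1,1}(\beta-\gamma)$).
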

\begin{proof}
    Since $b'b=1$ and $f(\widetilde{\beta})=(b\otimes I_{n})c(\beta)$, we have 
    \begin{equation}
        c(\beta)=(b'\otimes I_{n})f(\widetilde{\beta})\label{equation_cbprimef}
    \end{equation}
    for any $n\in\mathbb{Z}_{\geq1}$ and $\beta\in\Omega\cap M_{n}(\mathcal{B})$. 
    Hence, $c$ is an nc function from $\Omega\cap M(\mathcal{B})$ to $M(\mathcal{A})$ in the sense of \cite{kvv14}. 
    Using the difference-differential equation (\ref{differential_equation}), we have
    \begin{equation}
        c\left(\left[\begin{smallmatrix}
            \beta_{1}&X\\
            0&\beta_{2}
        \end{smallmatrix}\right]\right))
        =\left[
        \begin{smallmatrix}
            c(\beta_{1})&-c(\beta_{1})X(b\otimes I_{n_{2}})c(\beta_{2})\\
            0&c(\beta_{2})
        \end{smallmatrix}
        \right]\label{equation_dd_of_c}
    \end{equation}
    for any $\beta_{1}\in \Omega_{n_{1}}$, $\beta_{2}\in \Omega_{n_{2}}$ and $X\in M_{n_{1},n_{2}}(\mathcal{B})$. Using equation (\ref{equation_dd_of_c}) with the invertibility of $c$, we can see that $(\Delta c^{-1})(\beta_{1};\beta_{2})(X)=X(b\otimes I_{n_{2}})$ for any $\beta_{1}\in \Omega\cap M_{n_{1}}(\mathcal{B})$, $\beta_{2}\in \Omega\cap M_{n_{2}}(\mathcal{B})$ and $X\in M_{n_{1},n_{2}}(\mathcal{B})$. Hence, using the (affine) first order difference-differential formulae \cite[Theorem 2.10]{kvv14}, we have 
    \begin{equation}
        c^{-1}(\beta)-\beta(b\otimes I_{n})=c^{-1}(\beta')-\beta'(b\otimes I_{n})\label{equation_cinverse_difference}
    \end{equation} 
    for any $n\in\mathbb{Z}_{\geq1}$ and $\beta,\beta'\in \Omega\cap M_{n}(\mathcal{B})$.
    In particular, we have 
    \begin{equation}
        c^{-1}(\beta)-\beta b=c^{-1}(\beta_{0})-\beta_{0}b\label{equation_cinverse_difference1}
    \end{equation}
    for any $\beta\in \Omega\cap \mathcal{B}$, where $\beta_{0}$ is given in the difference-differential equation (\ref{differential_equation}). 
    Since $f(\widetilde{\beta_{0}})=b\cdot(\beta_{0}b-a)^{-1}$ and equation (\ref{equation_cbprimef}), we have $c(\beta_{0})=(\beta_{0}b-a)^{-1}$. Using this with equation (\ref{equation_cinverse_difference1}), we have obtained that $c(\beta)=(\beta b-a)^{-1}$ for any $\beta\in\Omega\cap\mathcal{B}$. Hence, combining the direct-sum-preserving property of $c$ with equation (\ref{equation_cinverse_difference}), we have $c(\beta)=(\beta(b\otimes I_{n})-a\otimes I_{n})^{-1}$ for any $\beta\in\Omega\cap M_{n}(\mathcal{B})$. 
    Therefore, we obtain that $f(\widetilde{\beta})=(b\otimes I_{n})\cdot (\beta\cdot(b\otimes I_{n})-a\otimes I_{n})^{-1}$ for any $n\in\mathbb{Z}_{\geq1}$ and $\beta\in \Omega\cap M_{n}(\mathcal{B})$. Moreover, we can see that for any $n\in\mathbb{Z}_{\geq1}$ and $\beta\in\Omega\cap M_{n}(\mathcal{B})$, $\widetilde{\beta}\in\widetilde{\rho}(\pi;\mathcal{B})\cap M_{n}(\mathcal{B})$ if and only if $\beta\cdot(b\otimes I_{n})-a\otimes I_{n}\in GL_{n}(\mathcal{A})$, based on the same computation (\ref{equation_unboundedresolvent}). This implies $\Omega\cap M(\mathcal{B})\subset\widetilde{\rho}(\pi;\mathcal{B})\cap M(\mathcal{B})$. We can then check that $\widetilde{\mathfrak{R}}(\pi;\mathcal{B})(\widetilde{\beta})=(b\otimes I_{n})\cdot (\beta\cdot(b\otimes I_{n})-a\otimes I_{n})^{-1}$ for any $n\in\mathbb{Z}_{\geq}$ and $\beta\in\Omega\cap M_{n}(\mathcal{B})$, as desired.
\end{proof}

\begin{Rem}
    In Theorem \ref{thm_partial_sufcond}, the element $b'$ is assumed to belong to $\mathcal{A}$. However, in the unbounded operator example discussed above, the natural candidate for $b'$ is the ``unbounded'' inverse $(1-a^*a)^{-\frac{1}{2}}$ of $(1-a^*a)^{\frac{1}{2}}$, which does not belong to $B(\mathcal{H})$ in general.  
    For this reason, Theorem \ref{thm_partial_sufcond} cannot be directly applied to the case when $B(\mathcal{H})=\mathcal{A}$ and $t$ is unbounded. Nevertheless, Theorem \ref{thm_partial_sufcond} still provides a conceptual explanation even in this setting. The key point is that \textit{what is essential is not the condition $b'\in\mathcal{A}$, but rather that the products $b'b$ and $b'f$ are well defined.} Indeed, let us focus on the case of $\mathcal{A}=B(\mathcal{H})$. If we assume that $b$ is injective and $b'$ is its (possibly unbounded) inverse (so that $b'$ no longer belongs to $\mathcal{A}$), then an argument entirely analogous to the proof of Theorem \ref{thm_partial_sufcond} still applies. In other words, the algebraic mechanism underlying the theorem remains valid as long as the relevant operator products are well defined, even if $b'$ itself lies outside the ambient algebra.
\end{Rem}

\section{Questions}\label{section_question}
In this section, we will raise several questions. 
Let $\mathcal{A}$ be a unital algebra over $\mathbb{C}$, and $\partial$ be a derivation $\mathcal{A}\to\mathcal{A}^{\otimes2}$ such that $\partial X=1\otimes1$ for some $X\in\mathcal{A}$. Voiculescu \cite{v00} proved that for any invertible element $a\in\mathcal{A}$, $\partial a=a\otimes a$ if and only if there exists an element $b\in\ker(\partial)$ such that $a=(b-X)^{-1}$. Theorem \ref{thm_nescond} can be regarded as the counterpart of the ``only if'' direction of this result. It is then natural to ask the counterpart of the ``if'' direction.
\begin{Que}\label{question_difference_differential_equation}
    Let $\mathcal{R}$ be a unital commutative ring and $\mathcal{B}\subset\mathcal{A}$ an inclusion of unital $\mathcal{R}$-algebras. Let $\Omega$ be a $(u,v)$-right admissible nc set over $Gr^{(d;m)}(\mathcal{B})$ for any $(u,v)\in[m-d]\times[d]$ and $\mathbf{f}=(f_{v,u})_{(v,u)\in[d]\times[m-d]}$ a family of nc functions from $\Omega$ to $M(\mathcal{A})$. Assume that $\mathbf{f}$ satisfies the following system of difference-differential equations:
    \begin{equation}
            (\widetilde{\Delta}^{(d;m)}_{s,t}f_{v,u})(\sigma';\sigma'')(X)=-f_{v,s}(\sigma')\cdot X\cdot f_{t,u}(\sigma'')
    \end{equation}
    for any $(s,t),(u,v)\in[m-d]\times[d],n',n''\in\mathbb{Z}_{\geq1}\mbox{, }\sigma'\in\Omega_{n'},\sigma''\in\Omega_{n''}$ and $X\in M_{n',n''}(\mathcal{B})$.
    Then, what are additional conditions on $\Omega$ or $\mathbf{f}$ so that there is an element $\pi\in Gr^{(m-d;d)}_{1}(\mathcal{A})$ such that $\Omega=\widetilde{\rho}^{(d;m)}(\pi;\mathcal{B})$ and $f_{v,u}=\widetilde{\mathfrak{R}}^{(d;m)}(\pi;\mathcal{B}|v,u)$ for any $(u,v)\in[m-d]\times [d]$ ? 
\end{Que}
Theorem \ref{thm_partial_sufcond} can be viewed as an attempt to answer to Question \ref{question_difference_differential_equation}. However, at present, we do not have a method to treat elements of $Gr^{(d;m)}(\mathcal{A})$ that do not arise from unbounded operators. 

\medskip
Agler-McCarthy-Young \cite{amy18} introduced the notion of \textit{nc manifold}. Then, we are naturally interested in the relation between their nc manifold and the nc Grassmannian. Hence, we have the following question:  
\begin{Que}
    Can we equip the nc $(d;m)$-Grassmanniann $Gr^{(d;m)}(\mathbb{C})$ with an nc manifold structure in the sense of \cite{amy18} ?
\end{Que}

In section \ref{section_moreabout}, we observed some relation between the Grassmannian resolvent of a densely-defined closed operator $t$ and its usual resolvent. We have seen that $\widetilde{\rho}(\pi(t);\mathcal{B})\cap\mathcal{B}\subset\rho(t;\mathcal{B})$. Then, the following question would be natural:

\begin{Que}
    Does it hold that $\widetilde{\rho}(\pi(t);\mathcal{B})\cap \mathcal{B}=\rho_{1}(t;\mathcal{B})$ ?
    Also, what kind of information of $t$ is contained in the higher dimensional resolvent set, $\bigsqcup_{n\geq2}\left(\widetilde{\rho}(\pi(t);\mathcal{B})\cap M_{n}(\mathcal{B})\right)$ ? 
\end{Que}

In the case when $\mathcal{B}=\mathbb{C}$, $\mathcal{H}=L^2(\mathbb{R})$ and $t$ is the multiplication operator $M_{p}$ by a polynomial function $p$ on $\mathbb{R}$, which is unitarily equivalent to the closure of the differential operator $p(-i\partial_{x})$, we can show that $\widetilde{\rho}(t;\mathbb{C})\cap \mathbb{C}=\rho(t)(=\rho_{1}(t;\mathbb{C}))$.

\begin{appendix}
\section{The nc flag manifold $F\ell^{(\mathbf{d};m)}(\mathcal{A})$ over $\mathcal{R}$-algebra $\mathcal{A}$}\label{appendix_flag}
In this section, we will explain how to extend the Grassmannian framework of nc functions, which we have developed so far, to those in ``flag manifold''.
\subsection{}

Let $\mathcal{R}$ be a unital commutative ring and $\mathcal{A}$ a unital $\mathcal{R}$-algebra.
Take arbitrarily $m\in\mathbb{Z}_{\geq1}$ and $\mathbf{d}=(d_{1},d_{2},\dots,d_{k})\in(\mathbb{Z}_{\geq1})^k$ with $d_{1}<d_{2}<\cdots<d_{k}<m$. Set
\begin{equation}
    \begin{split}
        \begin{aligned}
            LT^{(\mathbf{d})}_{n}(\mathcal{A})&=\left\{\left[
    \begin{smallmatrix}
        Z_{k} &       &      0     \\
        \ast  & \ddots &           \\
        \ast  & \ast   &   Z_{1} 
    \end{smallmatrix}
    \right]\,\middle|\,Z_{j}\in GL_{d_{j}-d_{j-1}}(M_{n}(\mathcal{A}))\mbox{ for any $j\in[k]$ $(d_{0}:=0)$}
    \right\},\\
            H^{(\mathbf{d},m)}_{n}(\mathcal{A})&=\left\{
    \left[
    \begin{smallmatrix}
        X&0\\
        Y&Z
    \end{smallmatrix}
    \right]\,\middle|\,X\in GL_{m-d_{k}}(M_{n}(\mathcal{A})),Y\in M_{d_{k},m-d_{k}}(M_{n}(\mathcal{A})),Z\in LT^{(\mathbf{d})}_{n}(\mathcal{A})
    \right\}.
        \end{aligned}
    \end{split}
\end{equation}

The equivalent relation $\sim^{(\mathbf{d};m)}_{n}$ on $GL_{m}(M_{n}(\mathcal{A}))$ is defined by, for any $A,B\in GL_{m}(M_{n}(\mathcal{A}))$, $A\sim^{(\mathbf{d};m)}_{n}B$ if and only if there exists an element $\Gamma\in H^{(\mathbf{d},;m)}_{n}(\mathcal{A})$ such that $A=B\Gamma$.

\medskip
    We define the \textit{nc flag manifold} $F\ell^{(\mathbf{d};m)}(\mathcal{A})$ over $\mathcal{A}$ by
    \[
    F\ell^{(\mathbf{d};m)}(\mathcal{A}):=\bigsqcup_{n\geq1}F\ell^{(\mathbf{d};m)}_{n}(\mathcal{A})\mbox{ with }F\ell^{(\mathbf{d};m)}_{n}(\mathcal{A}):=GL_{m}(M_{n}(\mathcal{A}))/\sim^{(\mathbf{d};m)}_{n}.
    \]

Then, we can embed the affine space $M_{n}(\mathcal{A})^{[\mathbf{d}]}$ (with $[\mathbf{d}]:=\sum_{j=1}^{k}(m-d_{j})(d_{j}-d_{j-1})$, $d_{0}=0$) into $F\ell^{(d;m)}(\mathcal{A})$ in a way that
\begin{equation}
    \begin{split}
        \begin{aligned}
            M_{n}(\mathcal{A})^{[\mathbf{d}]}
            &\ni X
            =\left[
            \begin{smallmatrix}
                0&0&\cdots&0&X(1,1)\\
                0&0&\cdots&X(2,2)&X(2,1)\\
                \vdots&\vdots&\cdot^{\cdot^{\cdot}}&\vdots&\vdots\\
                0&X(k-1,k-1)&\cdots&X(k-1,2)&X(k-1,1)\\
                X(k,k)&X(k,k-1)&\cdots&X(k,2)&X(k,1)
            \end{smallmatrix}
            \right]\\
            &\quad\mapsto \widetilde{X}:=
            \left[
            \begin{smallmatrix}
                0&0&0&0&I_{d_{1}}\otimes I_{n}\\
                0&0&0&I_{d_{2}-d_{1}}\otimes I_{n}&X(1,1)\\
                \vdots&\vdots&\cdot^{\cdot^{\cdot}}&\vdots&\vdots\\
                0&I_{d_{k}-d_{k-1}}\otimes I_{n}&\cdots&X(k-1,2)&X(k-1,1)\\
                I_{m-d_{k}}\otimes I_{n}&X(k,k)&\cdots&X(k,2)&X(k,1)
            \end{smallmatrix}
            \right]
            /\sim^{(\mathbf{d};m)}_{n}\in F\ell^{(\mathbf{d};m)}_{n}(\mathcal{A}),
        \end{aligned}
    \end{split}
\end{equation}
where $X(i,j)\in M_{d_{i+1}-d_{i},d_{j}-d_{j-1}}(M_{n}(\mathcal{A}))$ with $d_{0}:=0$ and $d_{k+1}:=m$.

\medskip
Let us take an element $\sigma\in F\ell^{(\mathbf{d};m)}_{n}(\mathcal{A})$. Then, from definition, it is clear that $A\sim^{(d_{j};m)}_{n}B$ for any $A,B\in\sigma$ and $j\in[k]$. Therefore, we have the well-defined map $p_{d_{j}}:F\ell^{(\mathbf{d};m)}_{n}(\mathcal{A})\ni A/\sim^{(\mathbf{d};m)}_{n}\mapsto A/\sim^{(d_{j};m)}_{n}\in Gr^{(d_{j};m)}_{n}(\mathcal{A})$ for any $j\in[k]$.

\subsection{}
The notions of nc set and nc function over $Gr^{(d;m)}(\mathcal{A})$ (see Definitions \ref{def_nc_operation} and \ref{def_nc_subset_function}) are naturally generalized to $F\ell^{(\mathbf{d};m)}(\mathcal{A})$. We can also generalize the \textit{Grassmannian intertwining property} (see Proposition \ref{prop_grassmann_intertwining}) to the present setting by the exactly same computation.
\begin{Prop}
    Let $\Omega$ be an nc set over $F\ell^{(\mathbf{d};m)}(\mathcal{A})$ and $f$ be a graded function from $\Omega$ to $M(\mathcal{B})$. Then, the following are equivalent:
    \begin{enumerate}
        \item[(a)] $f$ is nc.
        \item[(b)] $f$ satisfies the following condition:
        \begin{enumerate}
            \item[(I)] For any $n,n'\in\mathbb{Z}_{\geq1}$, $T\in M_{n,n'}(\mathcal{R})$ and $\sigma\in \Omega_{n}$, $\sigma'\in\Omega_{n'}$, we have
        \[
        \left[\begin{smallmatrix}
            I_{n}&T\\
            0&I_{n'}
        \end{smallmatrix}\right]\cdot(\sigma\oplus\sigma')=\sigma\oplus\sigma'\quad
        \Rightarrow\quad
        f(\sigma)T=Tf(\sigma').
        \]
        \end{enumerate}
    \end{enumerate}
\end{Prop}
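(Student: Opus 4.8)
The plan is to follow the proof of Proposition \ref{prop_grassmann_intertwining} essentially verbatim, with $Gr^{(d;m)}(\mathcal{A})$ replaced by $F\ell^{(\mathbf{d};m)}(\mathcal{A})$ throughout; as the text already indicates, ``the exactly same computation'' works, and the only point that deserves a word of justification is that the three ``model'' similarity relations used in that proof — the analogues of (\ref{eq_1}), (\ref{eq_2}), (\ref{eq_3}) — remain valid over the flag manifold.

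For (a)$\Rightarrow$(b): suppose $\left[\begin{smallmatrix}I_{n}&T\\0&I_{n'}\end{smallmatrix}\right]\cdot(\sigma\oplus\sigma')=\sigma\oplus\sigma'$, and set $s:=\left[\begin{smallmatrix}I_{n}&T\\0&I_{n'}\end{smallmatrix}\right]\in GL_{n+n'}(\mathcal{R})$. Since $f$ is nc, property (S) gives $f(\sigma\oplus\sigma')=f(s\cdot(\sigma\oplus\sigma'))=s\,f(\sigma\oplus\sigma')\,s^{-1}$, so $s$ commutes with $f(\sigma\oplus\sigma')$, while property (D) gives $f(\sigma\oplus\sigma')=f(\sigma)\oplus f(\sigma')$. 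Reading off the $(1,2)$-block of $s\bigl(f(\sigma)\oplus f(\sigma')\bigr)=\bigl(f(\sigma)\oplus f(\sigma')\bigr)s$ yields $Tf(\sigma')=f(\sigma)T$, which is exactly condition (I).

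For (b)$\Rightarrow$(a): the proof of Proposition \ref{prop_grassmann_intertwining} recovers (D) by feeding (\ref{eq_1}) and (\ref{eq_2}) into (I) (together with gradedness, to identify the block structure of $f(\sigma\oplus\sigma')$), and recovers (S) by feeding (\ref{eq_3}) into (I). These three identities persist in $F\ell^{(\mathbf{d};m)}(\mathcal{A})$: the operations $\widetilde{\oplus}$, $s^{\oplus m}(\cdot)$, and left multiplication act on representatives by the very same formulas, and the element $\Gamma$ of $GL_{\bullet}(M_{\bullet}(\mathcal{A}))$ that witnesses each of the relevant $\sim$-relations is literally the same matrix occurring in the Grassmannian argument. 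Now one uses the key structural observation $H^{(\mathbf{d},m)}_{n}(\mathcal{A})=\bigcap_{j=1}^{k}H^{(d_{j};m)}_{n}(\mathcal{A})$: a matrix is block lower triangular with invertible diagonal blocks for the partition of $\{1,\dots,m\}$ induced by $\mathbf{d}$ if and only if it is so for each single split $(m-d_{j},d_{j})$; equivalently, $\sim^{(\mathbf{d};m)}_{n}$ is the common refinement of the relations $\sim^{(d_{j};m)}_{n}$, $j\in[k]$. Hence each witnessing $\Gamma$, being already in $H^{(d_{j};m)}_{\bullet}(\mathcal{A})$ for every $j$ by the Grassmannian case, lies in $H^{(\mathbf{d},m)}_{\bullet}(\mathcal{A})$, so the flag versions of (\ref{eq_1}), (\ref{eq_2}), (\ref{eq_3}) hold, and the rest of the deduction of (D) and (S) goes through unchanged.

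I do not expect a genuine obstacle here: the single substantive point is the identification $H^{(\mathbf{d},m)}_{n}(\mathcal{A})=\bigcap_{j}H^{(d_{j};m)}_{n}(\mathcal{A})$, and once that is in hand every remaining step is the same block-matrix manipulation as in the Grassmannian proof.
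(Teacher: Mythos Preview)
Your proposal is correct and follows the paper's approach, which is literally ``the exactly same computation'' as Proposition~\ref{prop_grassmann_intertwining}. Your intersection identity $H^{(\mathbf{d},m)}_{n}(\mathcal{A})=\bigcap_{j}H^{(d_{j};m)}_{n}(\mathcal{A})$ is a valid (and correct) way to transfer the argument, though in fact the witnessing $\Gamma$'s for (\ref{eq_1})--(\ref{eq_3}) are all of the form $t^{\oplus m}$ for some $t\in GL_{\bullet}(\mathcal{R})$, hence block-diagonal in the outer $m\times m$ structure and trivially in $H^{(\mathbf{d},m)}_{\bullet}(\mathcal{A})$ without needing the intersection.
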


\subsection{}
The \textit{nc $(j;u,v)$-difference-differential operator} $\widetilde{\Delta}^{(\mathbf{d};m)}_{j;u,v}$ with $j\in[k]$ and $(u,v)\in[m-d_{j}]\times[d_{j}-d_{j-1}]$ is defined by $\widetilde{\Delta}^{(\mathbf{d};m)}_{j;u,v}:=\widetilde{\Delta}^{(d_{j};m)}_{u,d_{j-1}+v}$.

\subsection{}
In section \ref{section_Grassmann_resolvent}, we introduced the $(d;m)$-Grassmannian $\mathcal{B}$-resolvents. One possibility of its flag manifold extension, which has Grassmannian resolvent as parts, is the following:

Let $\mathbf{d}\in(\mathbb{Z}_{\geq1})^{k}$ with $d_{1}<d_{2}<\cdots<d_{k}<m$ with $k\in[m-1]$ and $\pi\in F\ell^{(m-\mathbf{d};m)}_{1}(\mathcal{A})$ with $m-\mathbf{d}:=(m-d_{k},m-d_{k-1},\dots,m-d_{1})$. Then, we define the \textit{$(\mathbf{d};m)$-flag $\mathcal{B}$-resolvent set} $\widetilde{\rho}^{(\mathbf{d};m)}(\pi;\mathcal{B})=\bigsqcup_{n\geq1}\widetilde{\rho}^{(\mathbf{d};m)}_{n}(\pi;\mathcal{B})$ in a way that
\begin{equation}
    \widetilde{\rho}^{(\mathbf{d};m)}_{n}(\pi;\mathcal{B})
    :=\left\{\sigma\in F\ell^{(\mathbf{d};m)}_{n}(\mathcal{B})\,\middle|\,p_{d_{j}}(\sigma)\in\widetilde{\rho}^{(d_{j};m)}_{n}(p_{m-d_{j}}(\pi);\mathcal{B})\mbox{ for any }j\in[k]\right\}.
\end{equation}

We define the \textit{$(\mathbf{d};m)$-flag $\mathcal{B}$-resolvent $\widetilde{\mathfrak{R}}^{(\mathbf{d};m)}(\pi;\mathcal{B}|j;v,u)$ of $\pi$ with respect to $(j;v,u)$} by
\begin{equation}
    \widetilde{\mathfrak{R}}^{(\mathbf{d};m)}(\pi;\mathcal{B}|j;v,u)(\sigma):=
    \widetilde{\mathfrak{R}}^{(d_{j};m)}(p_{m-d_{j}}(\pi);\mathcal{B}|d_{j-1}+v,u)(p_{d_{j}}(\sigma)),\quad\sigma\in\widetilde{\rho}^{(\mathbf{d};m)}(\pi;\mathcal{B})
\end{equation}
for any $j\in[k]$ and $(u,v)\in[m-d_{j}]\times[d_{j}-d_{j-1}]$.

\subsection{}
By Theorem \ref{thm_nescond}, we have the following:
\begin{Cor}
    For any $j\in[k]$ and $(s,t),(u,v)\in[m-d_{j}]\times[d_{j}-d_{j-1}]$, 
    We have
    \begin{equation}
        \left(\widetilde{\Delta}^{(\mathbf{d};m)}_{j;s,t}\widetilde{\mathfrak{R}}^{(\mathbf{d};m)}(\pi;\mathcal{B}|j;v,u)\right)(\sigma;\sigma')(X)
        =-\widetilde{\mathfrak{R}}^{(\mathbf{d};m)}(\pi;\mathcal{B}|j;v,s)(\sigma)\cdot X\cdot\widetilde{\mathfrak{R}}^{(\mathbf{d};m)}(\pi;\mathcal{B}|j;t,u)(\sigma')
    \end{equation}
    for any $\sigma\in\widetilde{\rho}^{(\mathbf{d};m)}_{n}(\pi;\mathcal{B})$, $\sigma' \in\widetilde{\rho}^{(\mathbf{d};m)}_{n'}(\pi;\mathcal{B})$ and $X\in M_{n,n'}(\mathcal{B})$.
\end{Cor}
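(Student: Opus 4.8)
The plan is to reduce the statement to Theorem~\ref{thm_nescond} through the projections $p_{d_{j}}$ and $p_{m-d_{j}}$. Fix $j\in[k]$, put $d:=d_{j}$ and $\pi_{j}:=p_{m-d_{j}}(\pi)\in Gr^{(m-d_{j};m)}_{1}(\mathcal{A})$. By the definitions in this appendix, $p_{d_{j}}$ sends $\widetilde{\rho}^{(\mathbf{d};m)}_{n}(\pi;\mathcal{B})$ into $\widetilde{\rho}^{(d_{j};m)}_{n}(\pi_{j};\mathcal{B})$, and on $\widetilde{\rho}^{(\mathbf{d};m)}(\pi;\mathcal{B})$ one has $\widetilde{\mathfrak{R}}^{(\mathbf{d};m)}(\pi;\mathcal{B}|j;v,u)=\widetilde{\mathfrak{R}}^{(d_{j};m)}(\pi_{j};\mathcal{B}|d_{j-1}+v,u)\circ p_{d_{j}}$ for every $(u,v)\in[m-d_{j}]\times[d_{j}-d_{j-1}]$. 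Moreover $\widetilde{\Delta}^{(\mathbf{d};m)}_{j;s,t}$ is, by construction, obtained from the \emph{same} representative-level formula (\ref{eq_6}) that defines $\widetilde{\Delta}^{(d_{j};m)}_{s,d_{j-1}+t}$, the only difference being that classes are taken modulo $\sim^{(\mathbf{d};m)}_{n+n'}$ rather than $\sim^{(d_{j};m)}_{n+n'}$. Note that $H^{(\mathbf{d};m)}_{n}(\mathcal{A})\subseteq H^{(d_{j};m)}_{n}(\mathcal{A})$: a matrix that is block-lower-triangular for the finer flag partition is block-lower-triangular, with invertible diagonal blocks, for the coarser $(m-d_{j},d_{j})$ partition. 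Hence $\sim^{(\mathbf{d};m)}_{n}$ refines $\sim^{(d_{j};m)}_{n}$, $p_{d_{j}}$ is well defined, and (because the representative-independence of (\ref{eq_6}) uses only that $\Gamma\,\widetilde{\oplus}\,\Gamma'$ lies in the ambient subgroup $H$) $p_{d_{j}}$ is compatible with $\widetilde{\oplus}$, with the similarity action, and with the construction of (\ref{eq_6}).

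From this compatibility I would first deduce the identity
\[
\bigl(\widetilde{\Delta}^{(\mathbf{d};m)}_{j;s,t}(\bar{g}\circ p_{d_{j}})\bigr)(\sigma;\sigma')(X)
=\bigl(\widetilde{\Delta}^{(d_{j};m)}_{s,d_{j-1}+t}\bar{g}\bigr)\bigl(p_{d_{j}}(\sigma);p_{d_{j}}(\sigma')\bigr)(X)
\]
for every nc function $\bar{g}$ over $Gr^{(d_{j};m)}(\mathcal{A})$ whose domain is $(s,d_{j-1}+t)$-admissible: the equation characterizing the left-hand side (the Lemma after the Notation in Section~\ref{section_ddoperator}) is transported by $p_{d_{j}}$ exactly onto the one characterizing the right-hand side, so the two coincide. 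Admissibility enters only to ensure both sides are defined for arbitrary $X$, through $(\sigma;\sigma')_{s,d_{j-1}+t}(rX)=(rI_{n}\oplus I_{n'})\cdot(\sigma;\sigma')_{s,d_{j-1}+t}(X)$ and the similarity-invariance of the resolvent set.

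With these preliminaries the corollary follows at once: apply the identity to $\bar{g}=\widetilde{\mathfrak{R}}^{(d_{j};m)}(\pi_{j};\mathcal{B}|d_{j-1}+v,u)$ and invoke Theorem~\ref{thm_nescond} for $\pi_{j}$ with index pairs $(s,d_{j-1}+t)$ and $(d_{j-1}+v,u)$, which gives
\[
\bigl(\widetilde{\Delta}^{(\mathbf{d};m)}_{j;s,t}\widetilde{\mathfrak{R}}^{(\mathbf{d};m)}(\pi;\mathcal{B}|j;v,u)\bigr)(\sigma;\sigma')(X)
=-\widetilde{\mathfrak{R}}^{(d_{j};m)}(\pi_{j};\mathcal{B}|d_{j-1}+v,s)(p_{d_{j}}\sigma)\cdot X\cdot\widetilde{\mathfrak{R}}^{(d_{j};m)}(\pi_{j};\mathcal{B}|d_{j-1}+t,u)(p_{d_{j}}\sigma');
\]
rewriting each resolvent factor on the right via the definition of $\widetilde{\mathfrak{R}}^{(\mathbf{d};m)}$ recalled above then yields the asserted formula (the whole index bookkeeping amounts to the shift $v\mapsto d_{j-1}+v$). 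The only genuinely non-formal point is the compatibility of $p_{d_{j}}$ with the difference-differential construction, i.e.\ that the perturbation term of (\ref{eq_6}) is insensitive to passing from $\sim^{(d_{j};m)}_{n+n'}$ to the finer $\sim^{(\mathbf{d};m)}_{n+n'}$; granting that, the rest is formal and Theorem~\ref{thm_nescond} applies directly. (As in Theorem~\ref{thm_nescond}, we assume $\widetilde{\rho}^{(d_{j};m)}(\pi_{j};\mathcal{B})$ is $(s,d_{j-1}+t)$-admissible.)
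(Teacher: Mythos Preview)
Your proof is correct and follows the same approach as the paper, which simply states ``By Theorem~\ref{thm_nescond}'' and leaves all the compatibility verifications (that $p_{d_{j}}$ intertwines direct sums, similarities, and the construction~(\ref{eq_6}), and hence the two difference-differential operators) implicit. You have spelled out carefully what the paper takes for granted, including the admissibility hypothesis that the Corollary tacitly inherits from Theorem~\ref{thm_nescond}.
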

\end{appendix}

\section*{Acknowledgements}
The author gratefully acknowledges his supervisor, Professor Yoshimichi Ueda, for his encouragement. The author also thanks Dr. Kenta Kojin for enjoyable discussions.
This work was partially supported by JST SPRING (Grant Number JPMJSP2125) and by JSPS Research Fellowship for Young Scientists (KAKENHI Grant
Number JP25KJ1405).
}

\end{document}